\documentclass[11pt,a4paper]{article}
\usepackage[pdftex]{graphicx}
\usepackage{amsmath,amssymb,amsfonts,amsthm}
\numberwithin{equation}{section}
\usepackage{indentfirst}
\usepackage{enumitem} 
\usepackage[margin=1.3in]{geometry}
\usepackage{fancyhdr}
\usepackage{float}

\usepackage[colorlinks=true,linkcolor=magenta,citecolor=blue,urlcolor=cyan]{hyperref}
\usepackage{titlesec}
\usepackage{etoolbox}
\usepackage{graphicx}  
\usepackage{changepage}
\theoremstyle{plain}
\newtheorem{theorem}{Theorem}[section]
\newtheorem{lemma}[theorem]{Lemma}
\newtheorem{corollary}[theorem]{Corollary}

\newtheorem{definition}[theorem]{Definition}
\newtheorem{remark}{Remark}[section]

\makeatletter
\renewcommand{\maketitle}{
	\begin{center}
		{\Large\bfseries{\@title}\par}
		\vskip 1em
		{\normalsize
			\lineskip .5em
			\begin{tabular}[t]{c}
				\@author
			\end{tabular}\par}
		\vskip 1.5em
	\end{center}
}
\makeatother
\renewenvironment{abstract}{
	\begin{adjustwidth}{1.3cm}{1.3cm}
		\noindent{\large\bfseries{A{\scriptsize BSTRACT.}}}
	}{
	\end{adjustwidth}
}

\usepackage{color}
\usepackage{xcolor}
\usepackage[normalem]{ulem} 
\usepackage{soul}

\usepackage{graphicx} 
\usepackage{xstring}  

\usepackage{xstring}
\usepackage{graphicx} 

\usepackage{marvosym}

\begin{document}
	\title{A Note On Certain Minimal Excludants Over Overpartitions}
	\author{Dipika Sarkar, M. P. Thejitha, and S. N. Fathima}
	\maketitle
	\begin{abstract}
		Let $\sigma\mathrm{Mex}(n)$ and $\sigma_e\mathrm{Mex}(n)$ denote the sum of minimal excludants and sum of even minimal excludants over all overpartitions, respectively. In this work, we study these two combinatorial objects from an arithmetic perspective. We prove that, for $n\geq 0$, 
		\begin{equation*}
			\sigma_e\mathrm{Mex}(n)\equiv\sigma\mathrm{Mex}(n)-\bar{p}_{\geq 2}(n)\pmod{2^2},
		\end{equation*}
		where $\bar{p}_{\geq 2}(n)$ denotes the number of overpartitions of $n$ with all parts at least $2$, and obtain relation with basic hypergeometric series.  Furthermore, we prove the asymptotic behavior of $\sigma\mathrm{Mex}(n)$, $\sigma_e\mathrm{Mex}(n)$, and $\bar{p}_{\geq 2}(n)$ as $n\to\infty$. In particular, we prove that
		$\sigma\mathrm{Mex}(n)\sim 2\,\sigma_e\mathrm{Mex}(n).$\\
		
		\noindent {\bf \small Keywords:}
		Overpartitions, Minimal excludant, Generating functions, Congruences, Hypergeometric series, Asymptotic formulas.\\
		
		\noindent {\bf \small Mathematics Subject Classification (2020):} 05A17, 11P83 
	\end{abstract}
	\bigskip
	\vspace{0.5em}
\section{Introduction}\label{cs1}
In what follows, we employ the usual notation as in \cite{Gasper}. For any complex
numbers $a$ and $q$ with $|q|<1$, define
\begin{align}
	f_k^m &:= (q^k;q^k)_\infty^m,
	\qquad
	\text{where}
	\qquad
	(a;q)_\infty
	:= \prod_{k=0}^{\infty}(1-aq^k),\nonumber \\
	(a;q)_n &:= \prod_{k=0}^{n-1}(1-aq^k)
	= \frac{(a;q)_\infty}{(aq^n;q)_\infty},
	\quad  n:\quad \text{any integer}. \label{c1.1a}
\end{align}
A partition $\lambda$ of a positive integer $n$ is a finite non-increasing sequence of positive integers $\lambda_1\geq \lambda_2\geq\dots\geq\lambda_k$ whose sum is $n$. The $\lambda_i$ are called the parts of the partition. The number of such partitions of $n$ is denoted by $p(n)$, with convention $p(0):=1$. The generating function for $p(n)$ is given by 
\begin{align*}
	\sum_{n=0}^{\infty}p(n)q^{n}=\frac{1}{(q;q)_\infty}.
\end{align*}
\indent An overpartition of $n$ extends the unrestricted partition to a partition where the first occurrence of a part may be overlined. For example, the eight overpartitions of $3$ are $3,\; \bar{3},\; 2+1,\; \bar{2}+1,\; 2+\bar{1},\; \bar{2}+\bar{1},\; 1+1+1,\; \bar{1}+1+1$. The number of overpartitions of $n$ is denoted by $\bar{p}(n)$. Corteel and Lovejoy \cite{Lovejoy}, noted that the generating function for $\bar{p}(n)$ is
\begin{equation*}
	\sum_{n=0}^{\infty} \bar{p}(n)\, q^n = \frac{f_2}{f_1^{2}}.
\end{equation*}
For a set $S$ of positive integers, a minimal excludant of $S$ is the least positive integer that is not a part of $S$. Andrews and Newman \cite{Newman}, defined the minimal excludant of an integer partition $\lambda$, denoted by $\mathrm{mex}(\lambda)$, as the least positive integer that is not part of $\lambda$. They also introduced sum of minimal excludants over all partitions of $n$, which is defined as
\begin{equation*}
	\sigma{\mathrm{mex}}(n) := \sum_{\lambda \in \mathcal{P}(n)} \mathrm{mex}(\lambda),
\end{equation*}
where $\mathcal{P}(n)$ is the set of all partitions of $n$. For example, we illustrate this in Table \ref{ctab1.1} below, where
\begin{equation*}
	\sigma{\mathrm{mex}}(4)=\sum_{\lambda \in \mathcal{P}(4)}{mex}(\lambda)=9.
\end{equation*}
\begin{center}
	\refstepcounter{table}
	\label{ctab1.1}
	\begin{tabular}{|c|c|c|}
		\hline
		Partition $\lambda$ of $4$ & Parts present & $\mathrm{mex}$\\
		\hline
		$4$ & $\{4\}$ & $1$ \\
		$3+1$ & $\{1,3\}$ & $2$ \\
		$2+2$ & $\{2\}$ & $1$ \\
		$2+1+1$ & $\{1,2\}$ & $3$ \\
		$1+1+1+1$ & $\{1\}$ & $2$ \\
		\hline
	\end{tabular}
	\par\smallskip \textbf{Table \thetable.}\quad Minimal excludant $\mathrm{mex}(\lambda)$ for each partition $\lambda$ of $n=4$.
\end{center}
\indent	Recently, many researchers have investigated minimal excludant variants of partition function, for more details we refer the readers to see \cite{Aricheta,Merca,  Barman,Ray,Stanton, Yee, Bhoria,  Liu, Ray1,Xu}. Baruah et. al \cite{Baruah}, considered sum of minimal excludants restricting the parts by parity. More precisely, they defined the functions  
\begin{equation*}
	\sigma_{o}{\mathrm{mex}}(n) := \sum_{\lambda \in \mathcal{P}(n)} \mathrm{mex}_o(\lambda) \quad \text{and} \quad
	\sigma_{e}{\mathrm{mex}}(n) := \sum_{\lambda \in \mathcal{P}(n)} \mathrm{mex}_e(\lambda),
\end{equation*}
where $\mathrm{mex}_e(\lambda)$ (resp. $\mathrm{mex}_o(\lambda)$) equals $\mathrm{mex}(\lambda)$ if $\mathrm{mex}(\lambda)$ is even (resp. odd), and equals $0$ otherwise.\\
\indent In this paper, we aim to extend the concept of the minimal excludant for overpartitions $\lambda$ of $n$. We now introduce the following definitions for the minimal excludant of the overlined and non-overlined parts of $\lambda$. Throughout the paper we use the notation ${\mathrm{Mex}}$, ${\mathrm{Mex}}_{e}$,  and  ${\mathrm{Mex}}_{o}$ to denote the new class of  minimal excludant, even minimal excludant, and odd minimal excludant for overpartitions, respectively.
\begin{definition}\label{cd1.1}
	The minimal excludant of an overpartition $\lambda$ is the smallest positive
	integer that does not occur as a part of $\lambda$, whether overlined or non-overlined, denoted by ${\mathrm{Mex}}(\lambda)$. For a positive integer $n$, denote the sum of ${\mathrm{Mex}}(\lambda)$ over all overpartitions $\lambda$ of $n$ by $\sigma{\mathrm{Mex}}(n)$:
	\begin{equation}
		\sigma{\mathrm{Mex}}(n) = \sum_{\lambda \in \bar{P}(n)} {\mathrm{Mex}}(\lambda),
	\end{equation}
\end{definition}
\noindent where $\bar{P}(n)$ denotes the set of all overpartitions of $n$. It is easy to note that ${\mathrm{Mex}}(\lambda)$ depends only on values that appear as parts, regardless of the overline. Therefore,
\begin{equation}
	{\sigma}{\mathrm{Mex}}(n) = \sum_{k \geq 1} k \cdot \#\{\lambda \in \bar{P}(n) : {\mathrm{Mex}}(\lambda) = k\}.
\end{equation}
\begin{definition}\label{cd1.2}
	The even minimal excludant of an overpartition $\lambda$ is the smallest
	positive even integer that does not occur among the overlined and non-overlined parts
	of $\lambda$, denoted by ${\mathrm{Mex}}_e(\lambda)$. For all $n \geq 0$:
	\begin{equation}
		{\sigma}_{e}{\mathrm{Mex}}(n) = \sum_{\lambda \in \bar{P}(n)} {\mathrm{Mex}}_e(\lambda),
	\end{equation}
	where ${\mathrm{Mex}}_e(\lambda)$ equals ${\mathrm{Mex}}(\lambda)$ if
	${\mathrm{Mex}}(\lambda)$ is even, and is equal to zero otherwise.
\end{definition}
\begin{definition}\label{cd1.3}\cite{veena}
	The odd minimal excludant of an overpartition $\lambda$ is the smallest
	positive odd integer that does not occur among the overlined and non-overlined parts
	of $\lambda$, denoted by ${\mathrm{Mex}}_o(\lambda)$. For all $n \geq 0$:
	\begin{equation}
		{\sigma}_{o}{\mathrm{Mex}}(n) = \sum_{\lambda \in \bar{P}(n)} {\mathrm{Mex}}_o(\lambda),
	\end{equation}
	where ${\mathrm{Mex}}_o(\lambda)$ equals ${\mathrm{Mex}}(\lambda)$ if
	${\mathrm{Mex}}(\lambda)$ is odd, and is equal to zero otherwise.
\end{definition}
\noindent We illustrate Def. \ref{cd1.1}-\ref{cd1.3}, for  $n = 3$ in Table \ref{ctab1.2} below.
\begin{center}
	\refstepcounter{table}
	\label{ctab1.2}
	\begin{tabular}{|c|c|c|c|c|}
		\hline
		Partition $\lambda$ of 3 & Parts present & ${\mathrm{Mex}}(\lambda)$ & ${\mathrm{Mex}}_e(\lambda)$ & ${\mathrm{Mex}}_o(\lambda)$ \\
		\hline
		$3$ & $\{3\}$ & 1 & 0 & 1\\
		$\bar{3}$ & $\{3\}$ & 1 & 0  & 1\\
		$2+1$ & $\{2, 1\}$ & 3 & 0 & 3 \\
		$\bar{2}+1$ & $\{2, 1\}$ & 3 & 0 & 3 \\
		$2+\bar{1}$ & $\{2, 1\}$ & 3 & 0 & 3 \\
		$\bar{2}+\bar{1}$ & $\{2, 1\}$ & 3 & 0 & 3 \\
		$1+1+1$ & $\{ 1\}$ & 2 & 2 & 0 \\
		$\bar{1}+1+1$ & $\{ 1\}$ & 2 & 2 & 0 \\
		\hline
	\end{tabular}
	\par\smallskip \textbf{Table \thetable.}\quad Minimal excludant over overpartitions of $n=3$.
\end{center}
Let $\bar{p}_{\geq 2}(n)$ denote the number of overpartitions of $n$ with all parts atleast 2. For $0 \leq n \leq 9$, the Table \ref{ctab1.3} below record the values for the combinatorial objects ${\sigma}{\mathrm{Mex}}(n)$, ${\sigma}_{e}{\mathrm{Mex}}(n)$, ${\sigma}_{o}{\mathrm{Mex}}(n)$, and $\bar{p}_{\geq 2}(n)$.
\begin{center}
	\refstepcounter{table}
	\label{ctab1.3}
	\begin{tabular}{|c| c c c c c c c c c c|}
		\hline
		$n$ & 0 & 1 & 2 & 3 & 4 & 5 & 6 & 7 & 8 & 9 \\
		\hline
		${\sigma}{\mathrm{Mex}}(n)$ & 1 & 4 & 6 & 18 & 28 & 50 & 94 & 150 & 238 & 372 \\
		${\sigma}_{e}{\mathrm{Mex}}(n)$ & 0 & 4 & 4 & 4 & 12 & 20 & 60 & 76 & 132 & 196 \\
		${\sigma}_{o}{\mathrm{Mex}}(n)$ & 1 & 0 & 2 & 14 & 16 & 30 & 34 & 74 & 106 & 176 \\
		$\bar{p}_{\geq 2}(n)$ & 1 & 0 & 2 & 2 & 4 & 6 & 10 & 14 & 22 & 32 \\
		\hline
	\end{tabular}
	\par\smallskip \textbf{Table  \thetable.}\quad Minimal excludant $\text{Mex}(\lambda)$ over overpartitions.
\end{center}
In \cite[p. 4, Eq. 1.2.22]{Gasper}, the basic hypergeometric series is defined by
\begin{equation}
	{}_r\phi_s
	\left(
	\begin{matrix}
		a_1,\ldots,a_r\\
		b_1,\ldots,b_s
	\end{matrix}
	;q,z
	\right)
	=
	\sum_{n=0}^{\infty}
	\frac{
		(a_1;q)_n\cdots(a_r;q)_n
	}{
		(q;q)_n
		(b_1;q)_n\cdots(b_s;q)_n
	}
	\left[
	(-1)^nq^{\binom{n}{2}}
	\right]^{1+s-r}
	z^n,
	\label{c4.11}
\end{equation}
where $|z|<1$, $a_1,a_2,\ldots,a_r$ and
$b_1,b_2,\ldots,b_s$ are arbitrary, except that
$(b_j;q)_n\neq0$ for $1\leq j\leq s$, and $(a;q)_n$ is as in
\eqref{c1.1a}. For $|q|<1$, the series on the right-hand
side of ${}_r\phi_s$ converges absolutely for $|z|<1$.
We now set  $r=1,\ s= 1$, $a_1=q,\ \text{and}\ b_1=-q$. Similarly, setting $r=1,\ s= 2$, $a_1=q^2,\ \text{and}\ b_1=-q,\ b_2=-q^2$, finally replacing the base $q\ to\ q^2$, we obtain
\begin{align}
	F_1(z):=&{}_1\phi_1\left(\begin{matrix}	q\\	-q\end{matrix};q,z\right)=\sum_{n=0}^{\infty}\frac{	(-1)^{n}q^{\binom{n}{2}}z^n}{(-q;q)_n},\label{c1.12a} \\
	F_2(z):=&{}_1\phi_2\left(\begin{matrix}q^2\\-q,-q^2\end{matrix};q^2,z\right)=\sum_{n=0}^{\infty}\frac{q^{2n(n-1)}z^n}{(-q;q^2)_n(-q^2;q^2)_n},\label{c1.12b}
\end{align}
respectively.\\
\indent In this sequel, we first establish the generating functions for
$\sigma{\mathrm{Mex}}(n)$ and ${\sigma}_{e}{\mathrm{Mex}}(n)$. Further, in the process, we observe arithmetic properties of $\sigma{\mathrm{Mex}}(n)$ and ${\sigma}_{e}{\mathrm{Mex}}(n)$ modulo powers of $2$, and also obtain relations between basic hypergeometric series with  $\sigma{\mathrm{Mex}}(n)$ and ${\sigma}_{e}{\mathrm{Mex}}(n)$. We now present our main theorems and corollaries.
\begin{theorem}\label{ct1.3}
	For all $n \geq 0$, we have
	\begin{equation*}
		\sum_{n=0}^{\infty} {\sigma}{\mathrm{Mex}}(n)q^{n} = \frac{1}{(q;q)_{\infty}} \sum_{k=0}^{\infty} k \, 2^{(k-1)} q^{\binom{k}{2}} (-q^{k+1};q)_{\infty} \, (1 - q^k).
	\end{equation*}
\end{theorem}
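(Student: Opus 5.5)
We classify overpartitions $\lambda$ of $n$ by the value $k=\mathrm{Mex}(\lambda)$ and write down a generating function for each class. The condition $\mathrm{Mex}(\lambda)=k$ says two things: each of $1,2,\dots,k-1$ appears as a part (overlined or not), and $k$ does not appear. Parts larger than $k$ are unrestricted overpartition parts.

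For a fixed value $j$, the overpartition generating function of the parts equal to $j$ is $\frac{1+q^j}{1-q^j}$. Removing the empty configuration leaves
\[
\frac{1+q^j}{1-q^j}-1=\frac{2q^j}{1-q^j},
\]
which is the generating function for "$j$ appears at least once". Hence the generating function for overpartitions with $\mathrm{Mex}=k$ is
\[
\prod_{j=1}^{k-1}\frac{2q^j}{1-q^j}\cdot\prod_{j=k+1}^{\infty}\frac{1+q^j}{1-q^j}
=2^{k-1}q^{\binom{k}{2}}\,\frac{(-q^{k+1};q)_\infty}{(q;q)_{k-1}\,(q^{k+1};q)_\infty}.
\]

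Next we pull out $1/(q;q)_\infty$. Since $(q;q)_\infty=(q;q)_{k-1}(1-q^k)(q^{k+1};q)_\infty$, we have
\[
\frac{1}{(q;q)_{k-1}(q^{k+1};q)_\infty}=\frac{1-q^k}{(q;q)_\infty}.
\]
So the $\mathrm{Mex}=k$ generating function equals $\frac{1}{(q;q)_\infty}\,2^{k-1}q^{\binom{k}{2}}(-q^{k+1};q)_\infty(1-q^k)$.

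Multiplying by $k$ and summing over $k\ge1$ gives the stated identity. The $k=0$ term vanishes because of the factor $k$, so it can be included harmlessly.

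As a check, the $q^0$ coefficient comes only from $k=1$ (the empty overpartition, with Mex $1$), which gives $1$. This matches $\sigma\mathrm{Mex}(0)=1$ in Table \ref{ctab1.3}.

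The only delicate step is the bookkeeping of the factor $1-q^k$: the excluded part $k$ contributes the factor $1$ rather than $\frac{1+q^k}{1-q^k}$, and this is exactly what produces $(1-q^k)$ after normalizing by $(q;q)_\infty$. Convergence as formal power series is clear, since the $k$-th summand is $O(q^{\binom{k}{2}})$.
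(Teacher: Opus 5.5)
Your proposal is correct and follows essentially the same argument as the paper: you classify by $\mathrm{Mex}(\lambda)=k$ and give each forced value $m<k$ the factor $\frac{2q^m}{1-q^m}$, the excluded value $k$ the factor $1$, and each larger value the free factor $\frac{1+q^m}{1-q^m}$; you then use $(q;q)_\infty=(q;q)_{k-1}(1-q^k)(q^{k+1};q)_\infty$ to pull out $1/(q;q)_\infty$, multiply by $k$, and sum. Your remarks that the $k=0$ term vanishes and that the $q^0$ coefficient recovers $\sigma\mathrm{Mex}(0)=1$ are small but welcome checks that the paper leaves implicit.
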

\begin{theorem}\label{ct1.4}
	For all $n \geq 0$, we have
	\begin{equation*}
		\sum_{n=0}^{\infty} {\sigma}_{e}{\mathrm{Mex}}(n)q^{n} = \frac{1}{(q;q)_{\infty}} \sum_{j=0}^{\infty} j \, 2^{2j} q^{\binom{2j}{2}} (-q^{2j+1};q)_{\infty} \, (1 - q^{2j}).
	\end{equation*}
\end{theorem}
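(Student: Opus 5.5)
We classify overpartitions of $n$ by the exact value of $\mathrm{Mex}(\lambda)$, as in Theorem \ref{ct1.3}, and keep only the classes where this value is even. For $k\geq 1$, an overpartition has $\mathrm{Mex}(\lambda)=k$ exactly when three conditions hold:
\begin{itemize}
	\item each of $1,2,\dots,k-1$ occurs as a part, overlined or not;
	\item $k$ does not occur;
	\item the parts larger than $k$ are unrestricted.
\end{itemize}

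The first step is to compute the generating function for a single part size $m$ that must occur. An unrestricted part $m$ contributes $(1+q^m)/(1-q^m)$, since it is either absent, or present with its first occurrence overlined or not. Forcing $m$ to appear removes the term $1$, which leaves
\[
\frac{1+q^m}{1-q^m}-1=\frac{2q^m}{1-q^m}.
\]

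The second step assembles the classes. Multiplying over $m=1,\dots,k-1$, and over $m>k$ with the unrestricted factor, the generating function of overpartitions with $\mathrm{Mex}=k$ is
\[
\prod_{m=1}^{k-1}\frac{2q^m}{1-q^m}\cdot\prod_{m>k}\frac{1+q^m}{1-q^m}
=\frac{2^{k-1}q^{\binom{k}{2}}(-q^{k+1};q)_\infty(1-q^k)}{(q;q)_\infty}.
\]
Here $\prod_{m\ne k}(1-q^m)^{-1}=(1-q^k)/(q;q)_\infty$. This is the summand of Theorem \ref{ct1.3} with the weight $k$ removed, and it also covers the empty overpartition at $n=0$ when $k=1$.

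The third step restricts to even values. By Definition \ref{cd1.2}, $\mathrm{Mex}_e(\lambda)$ equals $k$ when $k$ is even and $0$ otherwise. Putting $k=2j$ gives
\[
\sum_{n\ge0}\sigma_e\mathrm{Mex}(n)q^n=\frac{1}{(q;q)_\infty}\sum_{j\ge1}2j\,2^{2j-1}q^{\binom{2j}{2}}(-q^{2j+1};q)_\infty(1-q^{2j}).
\]
Since $2j\cdot 2^{2j-1}=j\,2^{2j}$, this is the right-hand side of the theorem. The $j=0$ term vanishes because of the factor $j$, so starting the sum at $j=0$ changes nothing.

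There is no real obstacle here; the only point needing care is the bookkeeping. One must be careful with the factor $(1-q^k)$, which reinstates the excluded part $k$ in the denominator. The factor $2^{k-1}$ comes from the overline choice on each forced part. As a check, the formula gives $\sigma_e\mathrm{Mex}(1)=4$, in agreement with Table \ref{ctab1.3}: the only overpartitions of $1$ with even $\mathrm{Mex}$ are $1$ and $\bar 1$, each contributing $2$.
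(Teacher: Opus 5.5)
Your proposal is correct and follows the paper's proof. Both classify overpartitions by the exact value $k=\mathrm{Mex}(\lambda)$ via the product $F_k(q)$ from the proof of Theorem \ref{ct1.3}, then set $k=2j$ and weight by $2j$ using $2j\cdot 2^{2j-1}=j\,2^{2j}$. Your explicit justification of the forced-part factor $\frac{1+q^m}{1-q^m}-1=\frac{2q^m}{1-q^m}$, and of why the $j=0$ term may be included, spells out bookkeeping that the paper leaves implicit.
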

Using Theorems~\ref{ct1.3} and \ref{ct1.4}, we establish relations between ${\sigma}{\mathrm{Mex}}(n)$, ${\sigma}_{e}{\mathrm{Mex}}(n)$,
and $\bar{p}_{\geq 2}(n)$ in the following Cor. \ref{ct1.5}-\ref{ct1.8}.
\begin{corollary}\label{ct1.5}
	We have
	\begin{align}
		{\sigma}{\mathrm{Mex}}(n) &\equiv 0 \pmod{2} \quad \forall\ n \geq 1 \label{c1.1} \\
		{\sigma}{\mathrm{Mex}}(n) &\equiv \bar{p}_{\geq 2}(n) \pmod{2^2} \quad \forall\, n \geq 0 \label{c1.2}\\
		{\sigma}_{e}{\mathrm{Mex}}(n) &\equiv 0 \pmod{2^2} \quad \forall\ n \geq 0 . \label{c1.5}
	\end{align}
\end{corollary}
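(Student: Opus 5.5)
We work directly with the generating functions of Theorem~\ref{ct1.3} and Theorem~\ref{ct1.4} and reduce the series term by term modulo $2$ and $2^2$. The idea is that the factor $k\,2^{k-1}$ kills almost every summand, so only the first one or two summands survive.

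\textbf{Proof of \eqref{c1.5}.} This is the easiest case. In Theorem~\ref{ct1.4} the $j=0$ term vanishes because of the factor $j$. Every term with $j\ge 1$ carries the factor $2^{2j}$, which is divisible by $2^2$. Since $1/(q;q)_\infty$ and $(-q^{2j+1};q)_\infty$ have integer coefficients, the whole right-hand side is $\equiv 0 \pmod{2^2}$ coefficientwise. Hence $\sigma_e\mathrm{Mex}(n)\equiv 0\pmod 4$ for all $n\ge 0$.

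\textbf{Proof of \eqref{c1.2}.} In Theorem~\ref{ct1.3} the $k=0$ term vanishes. Every term with $k\ge 3$ has $2^{k-1}$ divisible by $4$. The term $k=2$ has coefficient $2\cdot 2=4$, so it also vanishes modulo $4$. This leaves only the $k=1$ term, which gives
\begin{equation*}
\sum_{n\ge0}\sigma\mathrm{Mex}(n)q^n\equiv \frac{(-q^2;q)_\infty(1-q)}{(q;q)_\infty}=\frac{(-q^2;q)_\infty}{(q^2;q)_\infty}\pmod{4}.
\end{equation*}
It remains to check that $(-q^2;q)_\infty/(q^2;q)_\infty$ is the generating function for $\bar p_{\ge2}(n)$. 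This is the standard overpartition product restricted to parts $\ge 2$: each part size $m\ge2$ contributes $(1+q^m)/(1-q^m)$. A sanity check against Table~\ref{ctab1.3}: the coefficients $1,0,2,2,4,\dots$ agree with $1,4,6,18,28,\dots$ modulo $4$. This step is the only one requiring identification of a product, and it is routine.

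\textbf{Proof of \eqref{c1.1}.} We reduce the congruence just obtained modulo $2$. Since $1+q^m\equiv 1-q^m\pmod 2$, we have $(-q^2;q)_\infty/(q^2;q)_\infty\equiv 1\pmod 2$. Therefore $\sigma\mathrm{Mex}(n)$ is even for all $n\ge1$. Equivalently, $\bar p_{\ge2}(n)$ is even for $n\ge1$, because the overline on the largest part gives a fixed-point-free involution.

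I expect no real obstacle. The only care needed is bookkeeping: confirming that the $k=2$ term vanishes modulo $4$ (its coefficient is $k2^{k-1}=4$), and that $(1-q)$ cancels the factor of $1/(q;q)_\infty$ correctly.
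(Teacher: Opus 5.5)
Your proof is correct. Its overall strategy matches the paper's: reduce the series of Theorems~\ref{ct1.3} and~\ref{ct1.4} term by term, keep only the $k=1$ summand, and identify that summand with $(-q^2;q)_\infty/(q^2;q)_\infty$.

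The one substantive difference is in \eqref{c1.2}. The paper only records $k\,2^{k-1}\equiv 0\pmod 2$ for $k\ge 2$, so its congruence \eqref{c3.5} is stated only modulo $2$. It then cites Lemma~\ref{cl3.2} to conclude modulo $2^2$. But knowing the residues of $\bar{p}_{\ge 2}(n)$ modulo $4$ cannot by itself upgrade a mod-$2$ congruence for $\sigma\mathrm{Mex}(n)$ to a mod-$4$ one. Your observation closes exactly that gap: $k\,2^{k-1}\equiv 0\pmod{4}$ for every $k\ge 2$, with the $k=2$ summand contributing exactly $4$. So the single-term reduction is already valid modulo $4$, and \eqref{c1.2} follows directly.

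This also means your argument does not need Lemma~\ref{cl3.2} at all. That lemma's theta-function computation is only required for the perfect-square refinement in Corollary~\ref{ct1.7}.

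Deriving \eqref{c1.1} afterwards by reducing modulo $2$, rather than first as the paper does, makes no real difference. Your involution argument (toggling the overline on the largest part) is a valid independent check that $\bar{p}_{\ge 2}(n)$ is even for $n\ge 1$.
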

\begin{corollary}\label{ct1.7}
	For all $n \geq 1$, we have
	\begin{align}
		{\sigma}_{e}{\mathrm{Mex}}(n) &\equiv \bar{p}_{\geq 2}(n) \pmod{2} \label{c1.3} \\
		{\sigma}_{e}{\mathrm{Mex}}(n) &\equiv \bar{p}_{\geq 2}(n) \pmod{2^2} \quad \text{if and only if } n \text{ is a perfect square.} \label{c1.4}
	\end{align}
\end{corollary}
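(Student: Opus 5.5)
The plan is to reduce both statements to the behaviour of $\bar{p}_{\geq 2}(n)$ modulo $4$, using \eqref{c1.5} of Corollary~\ref{ct1.5}. That result gives $\sigma_e\mathrm{Mex}(n)\equiv 0 \pmod{2^2}$ for all $n$. So \eqref{c1.3} is equivalent to $\bar{p}_{\geq 2}(n)\equiv 0\pmod 2$ for $n\ge 1$. Likewise \eqref{c1.4} is equivalent to the statement that $\bar{p}_{\geq 2}(n)\equiv 0 \pmod 4$ holds exactly when $n$ is a perfect square. Neither statement needs Theorems~\ref{ct1.3} or~\ref{ct1.4} directly; only the generating function of $\bar{p}_{\geq 2}(n)$ is required.

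First I write the generating function. An overpartition into parts $\ge 2$ chooses, for each $k\ge 2$, either no part $k$ or a run of copies of $k$ whose first copy may be overlined. Hence
\begin{equation*}
\sum_{n\ge0}\bar{p}_{\geq 2}(n)q^n=\prod_{k\ge2}\frac{1+q^k}{1-q^k}=\prod_{k\ge 2}\left(1+\frac{2q^k}{1-q^k}\right).
\end{equation*}
As a check, this agrees with Table~\ref{ctab1.3}, for example $\bar{p}_{\geq 2}(2)=2$ and $\bar{p}_{\geq 2}(4)=4$. Each factor is $1$ plus $2$ times a power series with integer coefficients. Expanding the product, every term involving two or more of the summands $2q^k/(1-q^k)$ is divisible by $4$. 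Therefore
\begin{equation*}
\sum_{n\ge0}\bar{p}_{\geq 2}(n)q^n\equiv 1+2\sum_{k\ge2}\frac{q^k}{1-q^k}=1+2\sum_{n\ge1}\bigl(d(n)-1\bigr)q^n \pmod 4,
\end{equation*}
where $d(n)$ is the number of divisors of $n$. The term $-1$ removes the divisor $k=1$. Consequently $\bar{p}_{\geq 2}(n)\equiv 2(d(n)-1)\pmod 4$ for every $n\ge1$.

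Reducing this modulo $2$ gives $\bar{p}_{\geq 2}(n)\equiv 0\pmod 2$ for $n\ge 1$, and combined with \eqref{c1.5} this yields \eqref{c1.3}. For \eqref{c1.4}, $\bar{p}_{\geq 2}(n)\equiv 0\pmod 4$ holds if and only if $d(n)$ is odd. By the classical fact that divisors pair off as $m\leftrightarrow n/m$, $d(n)$ is odd exactly when $n$ is a perfect square. Otherwise $\bar{p}_{\geq 2}(n)\equiv 2\pmod 4$, while $\sigma_e\mathrm{Mex}(n)\equiv 0\pmod 4$, so the congruence fails. This proves the "if and only if". The values $n=4,9$ in Table~\ref{ctab1.3} are consistent with this.

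The only delicate point is the mod $4$ truncation of the infinite product. It is justified coefficientwise: for a fixed $n$, only finitely many factors contribute to the coefficient of $q^n$, so the finite expansion argument applies.
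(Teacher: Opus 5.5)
Your proof is correct, and its skeleton matches the paper's. Both use \eqref{c1.5} to get $\sigma_e\mathrm{Mex}(n)\equiv 0 \pmod 4$, which reduces the corollary to the behaviour of $\bar{p}_{\geq 2}(n)$ modulo $2$ and $4$. Where you differ is in how you obtain that information. The paper's proof of the corollary simply cites \eqref{c3.6} and Lemma~\ref{cl3.2}.

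The paper proves Lemma~\ref{cl3.2} with theta functions:
\begin{itemize}
\item it writes the generating function as $\frac{f_2}{f_1^2}\cdot\frac{1-q}{1+q}$;
\item it replaces $\frac{1-q}{1+q}=1+2\sum_{n\ge1}(-1)^nq^n$ by $\varphi(q)+2\sum_{n\neq k^2}q^n$ (an identity that holds only modulo $4$, though the paper writes it as an equality);
\item it uses $\varphi(q)=f_2^5/(f_1^2f_4^2)$ and Lemma~\ref{cl3.1} to get $\frac{f_2}{f_1^2}\varphi(q)\equiv 1 \pmod 4$.
\end{itemize}
In this route the perfect squares come from the theta series $\varphi(q)$.

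You instead write each factor as $1+2q^k/(1-q^k)$ and truncate the product modulo $4$. This gives the explicit congruence $\bar{p}_{\geq 2}(n)\equiv 2\bigl(d(n)-1\bigr)\pmod 4$. Here the squares come from the parity of the divisor function.

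Your route buys several things:
\begin{itemize}
\item it is self-contained and needs neither the Jacobi triple product nor Lemma~\ref{cl3.1};
\item it yields both \eqref{c3.6} and Lemma~\ref{cl3.2} at once;
\item it avoids the equality-versus-congruence sloppiness noted above;
\item the same truncation gives $\prod_{k\in S}\frac{1+q^k}{1-q^k}\equiv 1+2\sum_{n\ge1}\#\{k\in S: k\mid n\}\,q^n \pmod 4$ for any set $S$ of allowed parts.
\end{itemize}
The paper's route has the advantage of staying within the $f_k$/theta-function congruence toolkit it uses elsewhere.
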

\begin{corollary}\label{ct1.8}
	For all $n \geq 0$, we have
	\begin{equation*}
		{\sigma}_{e}{\mathrm{Mex}}(n) \equiv {\sigma}{\mathrm{Mex}}(n) - \bar{p}_{\geq 2}(n) \pmod{2^2}.
	\end{equation*}
\end{corollary}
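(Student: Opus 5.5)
Corollary~\ref{ct1.8} follows directly from Corollary~\ref{ct1.5}, so the plan is simply to combine the congruences stated there. By \eqref{c1.2} of Corollary~\ref{ct1.5}, for every $n\geq 0$ we have $\sigma\mathrm{Mex}(n)-\bar{p}_{\geq 2}(n)\equiv 0 \pmod{2^2}$. By \eqref{c1.5} of the same corollary, $\sigma_e\mathrm{Mex}(n)\equiv 0\pmod{2^2}$ for every $n\geq 0$. Both sides of the claimed congruence are therefore divisible by $4$, and hence
\begin{equation*}
\sigma_e\mathrm{Mex}(n)\equiv 0\equiv \sigma\mathrm{Mex}(n)-\bar{p}_{\geq 2}(n)\pmod{2^2},
\end{equation*}
which is the statement.

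Nothing else is needed, but I would check the small cases against Table~\ref{ctab1.3} as a sanity test. At $n=0$: $0\equiv 1-1$. At $n=1$: $4\equiv 4-0$. At $n=3$: $4\equiv 18-2=16$. At $n=6$: $60\equiv 94-10=84$. All hold modulo $4$.

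The only real content is upstream, in establishing \eqref{c1.2} and \eqref{c1.5} from Theorems~\ref{ct1.3} and \ref{ct1.4}. For \eqref{c1.5}, every summand in Theorem~\ref{ct1.4} with $j\geq1$ carries the factor $2^{2j}$. For \eqref{c1.2}, in Theorem~\ref{ct1.3} the terms with $k\geq 3$ carry $2^{k-1}\geq 4$. The $k=2$ term $4q(-q^3;q)_\infty(1-q^2)$ also vanishes modulo $4$. The $k=1$ term is $(-q^2;q)_\infty(1-q)$, and dividing it by $(q;q)_\infty$ gives $(-q^2;q)_\infty/(q^2;q)_\infty$, which is exactly the generating function of $\bar{p}_{\geq 2}(n)$.

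Since Corollary~\ref{ct1.5} is assumed, I expect no obstacle.
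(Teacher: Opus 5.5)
Your argument is correct and is essentially the paper's, since both deduce the corollary from Corollary~\ref{ct1.5}; the paper, however, routes it through Lemma~\ref{cl3.2} and displays the common residue as $2 \pmod{2^2}$ for non-square $n$. That display contradicts \eqref{c1.5}: for $n=2$ one has $\sigma_e\mathrm{Mex}(2)=4$ and $\sigma\mathrm{Mex}(2)-\bar{p}_{\geq 2}(2)=4$. Your observation that both sides are always $\equiv 0 \pmod{2^2}$ is the correct statement, and it needs no Lemma~\ref{cl3.2}.

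Your upstream justification of \eqref{c1.2} is also sounder than the paper's. You note that the $k=2$ term of Theorem~\ref{ct1.3} carries the factor $k\,2^{k-1}=4$, whereas the paper's argument for \eqref{c1.2} only establishes the congruence modulo $2$.
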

\noindent Now we obtain relation between $\sigma{\mathrm{Mex}}(n)$ and ${\sigma}_{e}{\mathrm{Mex}}(n)$ with basic hypergeometric series in the following Theorems \ref{ct1.9c}-\ref{ct1.9b}.
\begin{theorem}\label{ct1.9c}
	For $|q|<1$, we have
	\begin{equation*}
		\sum_{n=0}^{\infty}\sigma\mathrm{Mex}(n)q^n=\frac{(-q;q)_\infty}
		{2(q;q)_\infty}\lim_{z\to {-2}} z\left(F_1^{'}(z)-qF_1^{'}(qz)\right),
	\end{equation*}
	where $	F_1(z)$	is as defined in \eqref{c1.12a}.	
\end{theorem}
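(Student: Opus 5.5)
The plan is to start from the generating function of Theorem~\ref{ct1.3} and rewrite it so that the $k$-sum becomes a termwise derivative of the series $F_1$ from \eqref{c1.12a}. The first step is to factor the infinite product out of the summand. Using \eqref{c1.1a} with $a=-q$, we have
\begin{equation*}
(-q^{k+1};q)_\infty=\frac{(-q;q)_\infty}{(-q;q)_k}.
\end{equation*}
Substituting this into Theorem~\ref{ct1.3} gives
\begin{equation*}
\sum_{n=0}^{\infty}\sigma\mathrm{Mex}(n)q^n=\frac{(-q;q)_\infty}{(q;q)_\infty}\sum_{k=0}^{\infty}\frac{k\,2^{k-1}q^{\binom{k}{2}}(1-q^k)}{(-q;q)_k}
=\frac{(-q;q)_\infty}{2(q;q)_\infty}\sum_{k=0}^{\infty}\frac{k\,2^{k}q^{\binom{k}{2}}(1-q^k)}{(-q;q)_k}.
\end{equation*}

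The second step is to identify the last sum with $z\bigl(F_1'(z)-qF_1'(qz)\bigr)$ at $z=-2$. Differentiating \eqref{c1.12a} termwise gives
\begin{equation*}
zF_1'(z)=\sum_{k\ge0}\frac{k(-1)^kq^{\binom{k}{2}}z^k}{(-q;q)_k}.
\end{equation*}
Replacing $z$ by $qz$ in $F_1'$ and then multiplying by $qz$ gives
\begin{equation*}
qzF_1'(qz)=\sum_{k\ge0}\frac{k(-1)^kq^{\binom{k}{2}+k}z^k}{(-q;q)_k}.
\end{equation*}
Subtracting, we obtain
\begin{equation*}
z\bigl(F_1'(z)-qF_1'(qz)\bigr)=\sum_{k\ge0}\frac{k(-1)^kq^{\binom{k}{2}}(1-q^k)z^k}{(-q;q)_k}.
\end{equation*}
At $z=-2$ we have $(-1)^kz^k=2^k$, so this is exactly the sum in the first step. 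Together with the prefactor $\frac{(-q;q)_\infty}{2(q;q)_\infty}$, this proves the stated identity.

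The only point needing care, which I expect to be the mild obstacle, is justifying the termwise differentiation and the passage $z\to-2$. The definition of ${}_r\phi_s$ quoted from \cite{Gasper} asserts convergence only for $|z|<1$, whereas $z=-2$ lies outside that disc. Here, however, $r=s=1$, so the factor $\bigl[(-1)^nq^{\binom{n}{2}}\bigr]^{1+s-r}$ supplies the Gaussian term $q^{\binom{n}{2}}$. Moreover, $|1/(-q;q)_n|\le 1/(-|q|;|q|)_\infty^{-1}$-type bounds show that $1/(-q;q)_n$ is bounded uniformly in $n$ for fixed $|q|<1$. Hence the coefficients decay faster than any geometric rate, so $F_1$ is entire in $z$. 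Consequently termwise differentiation is valid on all of $\mathbb{C}$, and the limit $z\to-2$ is simply evaluation at $z=-2$. Absolute convergence also justifies splitting the difference into two separate series and recombining them. The same estimate shows that the right-hand side, viewed as a power series in $q$, agrees coefficientwise with the left-hand side, which completes the argument.
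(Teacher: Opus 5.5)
Your proposal is correct and follows the paper's proof essentially step for step. Like the paper, you factor out $(-q;q)_\infty$ using $(-q^{k+1};q)_\infty=(-q;q)_\infty/(-q;q)_k$, then recognize the resulting sum as $z\frac{d}{dz}$ applied termwise to $F_1(z)-F_1(qz)$ at $z=-2$. Your remark that $F_1$ is entire in $z$, so the limit is just evaluation, is a worthwhile justification the paper omits. The bound you wrote for it is garbled, though; the clean version is $|1/(-q;q)_n|\le 1/(|q|;|q|)_\infty$, which follows from $|1+q^j|\ge 1-|q|^j$.
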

\begin{theorem}\label{ct1.9b}
	For $|q|<1$, we have 
	\begin{equation*}
		\sum_{n=0}^{\infty}\sigma_{e}\mathrm{Mex}(n)q^n=\frac{(-q;q)_\infty}
		{(q;q)_\infty}\lim_{z\to {4q}} z \left(F_2^{'}(z)-q^2F_2^{'}(q^2z)\right),
	\end{equation*}
	where $	F_2(z)$ is as defined in \eqref{c1.12b}.
\end{theorem}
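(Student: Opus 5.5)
The plan is to start from the generating function in Theorem~\ref{ct1.4} and rewrite each summand so that it becomes exactly the $j$-th term of the series $z\bigl(F_2'(z)-q^2F_2'(q^2z)\bigr)$ evaluated at $z=4q$. No new combinatorics should be needed; the argument is a term-by-term identification of two $q$-series.

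\textbf{Step 1: rewrite the summand of Theorem~\ref{ct1.4}.} Since $q^{\binom{2j}{2}}=q^{2j^2-j}$ and $2^{2j}=4^j$, the summand is $j\,4^j q^{2j^2-j}(-q^{2j+1};q)_\infty(1-q^{2j})$. I use \eqref{c1.1a} to write
\[
(-q^{2j+1};q)_\infty=\frac{(-q;q)_\infty}{(-q;q)_{2j}}.
\]
I then split the finite product into odd and even factors,
\[
(-q;q)_{2j}=(-q;q^2)_j\,(-q^2;q^2)_j .
\]
This gives
\[
\sum_{n\ge0}\sigma_e\mathrm{Mex}(n)q^n=\frac{(-q;q)_\infty}{(q;q)_\infty}\sum_{j\ge0}\frac{j\,4^j q^{2j^2-j}(1-q^{2j})}{(-q;q^2)_j(-q^2;q^2)_j}.
\]

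\textbf{Step 2: compute the right-hand side of Theorem~\ref{ct1.9b}.} Write $D_n=(-q;q^2)_n(-q^2;q^2)_n$ and start from \eqref{c1.12b}. Differentiating termwise gives
\[
zF_2'(z)=\sum_{n\ge0}\frac{n\,q^{2n(n-1)}z^n}{D_n}.
\]
Replacing $z$ by $q^2z$ in this identity gives
\[
z\,q^2F_2'(q^2z)=\sum_{n\ge0}\frac{n\,q^{2n(n-1)}q^{2n}z^n}{D_n}.
\]
Subtracting the two, I get
\[
z\bigl(F_2'(z)-q^2F_2'(q^2z)\bigr)=\sum_{n\ge0}\frac{n\,q^{2n^2-2n}(1-q^{2n})z^n}{D_n}.
\]
Setting $z=4q$ turns the $n$-th term into $n\,4^nq^{2n^2-n}(1-q^{2n})/D_n$. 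This is exactly the $j=n$ term from Step 1, so the two sides agree.

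\textbf{Step 3: justify the termwise operations and the limit.} I expect this to be the only real obstacle, and it is mild. The quadratic factor $q^{2n(n-1)}$ dominates $|z|^n$ for every fixed $z$. Also $|D_n|$ is bounded below uniformly in $n$ for $|q|<1$, since $(-q;q)_\infty\neq0$. Hence the series for $F_2$ converges locally uniformly on all of $\mathbb{C}$, so $F_2$ is entire. Termwise differentiation is therefore allowed, and $z\bigl(F_2'(z)-q^2F_2'(q^2z)\bigr)$ is continuous in $z$. The limit $z\to4q$ is then just evaluation at $4q$. The restriction $|z|<1$ attached to \eqref{c4.11} does not matter here, because the extra factor $q^{2n(n-1)}$ makes the series converge everywhere. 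Combining Steps 1–3 yields Theorem~\ref{ct1.9b}.
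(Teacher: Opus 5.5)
Your proposal is correct and follows the paper's proof essentially step for step. Both rewrite the summand of Theorem~\ref{ct1.4} using $(-q^{2j+1};q)_\infty=(-q;q)_\infty/\bigl((-q;q^2)_j(-q^2;q^2)_j\bigr)$ and $\binom{2j}{2}=2j(j-1)+j$, then apply $z\,\frac{d}{dz}$ termwise to $F_2(z)-F_2(q^2z)$ and set $z=4q$. Your Step~3 adds something the paper omits: it justifies the termwise differentiation and explains why evaluating at $z=4q$ is legitimate even when $|4q|\geq 1$, since the factor $q^{2n(n-1)}$ makes $F_2$ entire.
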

Further, our overarching goal in this work is to study ${\sigma}{\mathrm{Mex}}(n)$ and $ {\sigma}_{e}{\mathrm{Mex}}(n)$ from an asymptotic perspective. With this in mind, we prove the following theorems.
\begin{theorem}\label{ct1.9a}
	As $n \to \infty$, we have
	\begin{equation*}
		{\sigma}{\mathrm{Mex}}(n) \sim \frac{\sqrt{2}}{8} n^{-3/4} e^{\pi \sqrt{n}}. 
	\end{equation*}
\end{theorem}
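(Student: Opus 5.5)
The plan is to get a simpler generating function, find its behaviour as $q\to1^-$, and finish with a Tauberian theorem of Ingham type.

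\textbf{Step 1: generating function.} I will not start from Theorem~\ref{ct1.3} but from the layer-cake identity
\[
\mathrm{Mex}(\lambda)=\sum_{k\ge 0}\mathbf 1[\,1,\dots,k \text{ all occur in }\lambda\,].
\]
For each $j\le k$, forcing $j$ to occur (overlined or not) replaces the factor $\frac{1+q^j}{1-q^j}$ by $\frac{1+q^j}{1-q^j}-1=\frac{2q^j}{1-q^j}$. Summing over $k$ gives
\[
\sum_{n\ge0}\sigma\mathrm{Mex}(n)q^n=\frac{(-q;q)_\infty}{(q;q)_\infty}\,S(q),\qquad S(q):=\sum_{k\ge0}\frac{2^k q^{k(k+1)/2}}{(-q;q)_k}.
\]
This should agree with Theorem~\ref{ct1.3} after summation by parts, which gives a check on the formula.

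\textbf{Step 2: behaviour of $S$.} Put $q=e^{-t}$ with $t\to0^+$. The $k$-th term is
\[
\prod_{j=1}^k\frac{2}{1+e^{jt}}=\exp\Big(-\sum_{j\le k}\log\tfrac{1+e^{jt}}{2}\Big).
\]
Since $\log\frac{1+e^{x}}{2}=\frac{x}{2}+O(x^2)$, each term is $\exp\!\big(-\tfrac{tk^2}{4}+O(tk+t^2k^3)\big)$. I therefore expect
\[
S(e^{-t})\sim\int_0^\infty e^{-tx^2/4}\,dx=\sqrt{\pi/t}.
\]
To make this rigorous, split the sum at $k=t^{-1/2-\epsilon}$. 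In the range $k\le t^{-1/2-\epsilon}$ the error terms are $o(1)$ in the exponent, so Euler–Maclaurin applies. In the range $k> t^{-1/2-\epsilon}$ the terms are bounded by $\exp(-c\min(tk^2,k))$, using that $\log\frac{1+e^x}{2}\ge c\min(x,1)$; this tail is negligible. This uniform control of the tail is the main technical obstacle, but it is elementary.

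\textbf{Step 3: modular factor.} Write
\[
\frac{(-q;q)_\infty}{(q;q)_\infty}=\frac{(q^2;q^2)_\infty}{(q;q)_\infty^2}
\]
and apply the Dedekind eta transformation. This gives
\[
\frac{(-q;q)_\infty}{(q;q)_\infty}\sim \frac{\sqrt t}{2\sqrt\pi}\,e^{\pi^2/(4t)}.
\]
Multiplying by Step 2, the generating function $G(e^{-t})$ satisfies
\[
G(e^{-t})\sim \tfrac12\,e^{\pi^2/(4t)}.
\]

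\textbf{Step 4: Tauberian step.} I will use Ingham's theorem: if $F(e^{-t})\sim\lambda t^{\alpha}e^{A/t}$ and the coefficients are weakly increasing, then
\[
a(n)\sim\frac{\lambda A^{\alpha/2+1/4}}{2\sqrt\pi\,n^{\alpha/2+3/4}}\,e^{2\sqrt{An}}.
\]
Monotonicity holds because the map $\lambda\mapsto\lambda+1$ (add a non-overlined part $1$) injects overpartitions of $n$ into those of $n+1$ and cannot decrease $\mathrm{Mex}$. Hence $\sigma\mathrm{Mex}(n)\le\sigma\mathrm{Mex}(n+1)$. With $\lambda=\tfrac12$, $\alpha=0$ and $A=\pi^2/4$, the theorem gives
\[
\sigma\mathrm{Mex}(n)\sim \frac{\tfrac12(\pi/2)^{1/2}}{2\sqrt\pi}\,n^{-3/4}e^{\pi\sqrt n}=\frac{\sqrt2}{8}\,n^{-3/4}e^{\pi\sqrt n},
\]
which is the statement.
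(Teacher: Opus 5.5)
Your argument is correct. It has the same skeleton as the paper's proof: real-variable asymptotics of the generating function via a Gaussian Riemann sum, the eta asymptotics $f_2/f_1^2\sim\frac{\sqrt t}{2\sqrt\pi}e^{\pi^2/(4t)}$, then Theorem~\ref{ct4.1} with $\alpha=\frac12$, $\beta=0$, $C=\pi^2/4$. The difference is the decomposition.

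The paper works from Theorem~\ref{ct1.3}, i.e.\ from the weights $k\cdot\mathbf{1}[\mathrm{Mex}=k]$. Its summand therefore carries the factor $N(1-e^{-Nt})\approx u^2$, and it evaluates $\int_0^\infty u^2e^{-u^2/4}\,du=2\sqrt\pi$; the $\frac12$ from $2^{N-1}$ brings this back to $\sqrt{\pi/t}\,f_2/f_1^2$. Your layer-cake identity is the summation-by-parts form of the same series. If $P_k$ generates overpartitions in which $1,\dots,k$ all occur, then $F_k=P_{k-1}-P_k$ and $\sum_k kF_k=\sum_k P_k$. This leaves the summand $\prod_{j\le k}2/(1+e^{jt})$, a clean product with the plain Gaussian limit $\int_0^\infty e^{-u^2/4}\,du$.

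What this buys is rigor the paper's proof lacks. Lemma~\ref{cl4.2} is proved only under $Nt\to0$, $N^2t=O(1)$, yet the paper applies it to every $N$ in the sum with no tail estimate. The paper also never verifies the monotonicity hypothesis of Theorem~\ref{ct4.1}. You supply a tail bound and a correct monotonicity argument (the injection $\lambda\mapsto\lambda+1$).

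Your tail estimate can be shortened. Since $\frac{d}{dx}\log\frac{1+e^x}{2}=\frac{e^x}{1+e^x}\ge\frac12$, every term of $S(e^{-t})$ is at most $e^{-tk(k+1)/4}$, which dominates the whole sum at once. In the inner range, take $\epsilon<1/6$ so that $t^2k^3\le t^{1/2-3\epsilon}\to0$.

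One caveat applies equally to your argument and to the paper's. Ingham's theorem with the hypothesis only for real $t\to0^+$, as in Theorem~\ref{ct4.1} and your Step~4, is known to be insufficient in general. Bringmann, Jennings-Shaffer and Mahlburg give a counterexample and a corrected version, in which the asymptotic must hold as $t\to0$ in cones $|\mathrm{Im}\,t|\le\Delta\,\mathrm{Re}\,t$. The modular factor satisfies this automatically through the eta transformation. For a complete proof, however, you would also have to carry out the Step~2 analysis of $S(e^{-t})$ for complex $t$ in such cones.
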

\begin{theorem}\label{ct1.10}
	As $n \to \infty$, we have
	\begin{equation*}
		{\sigma}_{e}{\mathrm{Mex}}(n) \sim \frac{1}{8\sqrt{2}} n^{-3/4} e^{\pi \sqrt{n}}. 
	\end{equation*}
\end{theorem}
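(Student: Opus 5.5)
The plan is to read off the asymptotic from the generating function in Theorem~\ref{ct1.4} with a Tauberian/circle-method argument, after factoring out the overpartition generating function. Since $(-q^{2j+1};q)_\infty=(-q;q)_\infty/(-q;q)_{2j}$, Theorem~\ref{ct1.4} gives
\begin{equation*}
\sum_{n\ge0}\sigma_e\mathrm{Mex}(n)q^n=\frac{f_2}{f_1^2}\,S(q),\qquad S(q):=\sum_{j\ge1}\frac{j\,4^{j}q^{j(2j-1)}(1-q^{2j})}{(-q;q)_{2j}} .
\end{equation*}
So the task splits into three steps: modular asymptotics for $f_2/f_1^2$, an asymptotic for the $q$-series $S(q)$ as $q\to1$, and a transfer to the coefficients.

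\textbf{Step 1 (the overpartition factor).} Put $q=e^{-t}$ with $t\to0^+$. By the Dedekind $\eta$ transformation, $(q;q)_\infty\sim\sqrt{2\pi/t}\,e^{-\pi^2/(6t)}$. Hence
\begin{equation*}
\frac{f_2}{f_1^2}\sim\sqrt{\frac{t}{4\pi}}\;e^{\pi^2/(4t)},
\end{equation*}
and this holds uniformly for $t=x+iy$ in a cone $|y|\le Mx$. Away from $q=1$ on the circle $|q|=e^{-x}$, one has the standard bound $|f_2/f_1^2|\ll e^{(\pi^2/4-\varepsilon)/x}$.

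\textbf{Step 2 (the series $S$).} For $j\ll t^{-1/2}$ the approximations $(-q;q)_{2j}=4^{j}\bigl(1+O(j^2t)\bigr)$ and $1-q^{2j}=2jt\,(1+O(jt))$ hold. The factor $q^{2j^2}$ cuts the sum off at $j\asymp t^{-1/2}$, so
\begin{equation*}
S(e^{-t})\sim 2t\sum_{j\ge1}j^2e^{-2j^2t}\sim 2t\int_0^\infty u^2e^{-2tu^2}\,du=c\,t^{-1/2},\qquad c=\frac{\sqrt\pi}{4\sqrt2}.
\end{equation*}
To make this rigorous I would use Euler--Maclaurin, or simply dominated convergence after the substitution $j=u/\sqrt t$. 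The tail $j\gg t^{-1/2+\delta}$ is negligible, because for real $q$ one has $4^j/(-q;q)_{2j}\ge1$ while the Gaussian factor $q^{2j^2}$ is superexponentially small there. The same estimates should extend to complex $t$ in the cone, giving $S\sim c\,t^{-1/2}$ uniformly there. On the minor arcs $|S(q)|$ is at most polynomially large in $1/x$, by the crude bound $|(-q;q)_{2j}|^{-1}\le\prod(1-|q|^k)^{-1}$ together with $|q|^{2j^2}$ decay.

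\textbf{Step 3 (transfer to coefficients).} The product has the form $\lambda\, e^{\gamma/t}$ with $\gamma=\pi^2/4$ and $\lambda$ the resulting constant: the factor $t^{1/2}$ from Step~1 cancels the $t^{-1/2}$ from Step~2. I would apply Wright's circle method in the form of Bringmann--Craig--Males--Ono (or Ngo--Rhoades), whose hypotheses are exactly the cone asymptotic and the minor-arc bound above. It yields
\begin{equation*}
\sigma_e\mathrm{Mex}(n)\sim\frac{\lambda\,\gamma^{1/4}}{2\sqrt\pi}\,n^{-3/4}e^{2\sqrt{\gamma n}},
\end{equation*}
and $2\sqrt{\gamma n}=\pi\sqrt n$ gives the required exponential. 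Alternatively, if one proves that $\sigma_e\mathrm{Mex}(n)$ is nondecreasing (which the table suggests, e.g.\ via an injection adding a part $1$ to each overpartition with parts at least $2$), Ingham's Tauberian theorem gives the same conclusion using only real $t$. The constant then comes out from $\lambda=c/(2\sqrt\pi)$, and I would recheck the normalizations in Steps~1--2 carefully to match $\frac{1}{8\sqrt2}$.

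The main obstacle is Step~2 made uniform in complex $t$: controlling $(-q;q)_{2j}^{-1}$ and the error in replacing it by $4^{-j}$ when $q$ is complex near $1$. I would first try writing $\log(-q;q)_{2j}-2j\log2$ as a sum of $\log\bigl((1+q^k)/2\bigr)$ terms, each $O(k|t|)$, to get an $O(j^2|t|)$ error. That error is harmless in the range $j\ll|t|^{-1/2}$ where the Gaussian factor lives.
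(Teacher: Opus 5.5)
Your decomposition is the paper's (factor out $f_2/f_1^2$, evaluate the remaining $j$-sum at $q=e^{-t}$, transfer to coefficients). However, Step~2 contains a genuine error that changes the constant.

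The estimate $(-q;q)_{2j}=4^{j}\bigl(1+O(j^2t)\bigr)$ gives $(-q;q)_{2j}\sim4^j$ only when $j^2t\to0$. But the factor $q^{2j^2}$ puts the bulk of the sum at $j\asymp t^{-1/2}$, where $j^2t\asymp1$, so the factor you discard is of order one. Expanding $\log(1+e^{-kt})=\log2-\frac{kt}{2}+O(k^2t^2)$ gives
\begin{equation*}
(-q;q)_{2j}=4^{j}\exp\Bigl(-\frac{tj(2j+1)}{2}+O(j^3t^2)\Bigr)\sim4^{j}e^{-j^2t}\qquad(j\asymp t^{-1/2}),
\end{equation*}
which is exactly Lemma~\ref{cl4.2} with $N=2j$. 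Dividing by it turns $q^{2j^2}$ into $q^{j^2}$, so
\begin{equation*}
S(e^{-t})\sim2t\sum_{j\ge1}j^2e^{-j^2t}\sim\frac{\sqrt\pi}{2}\,t^{-1/2},
\end{equation*}
not $\frac{\sqrt\pi}{4\sqrt2}\,t^{-1/2}$. Your closing remark that the error is harmless ``in the range $j\ll|t|^{-1/2}$ where the Gaussian factor lives'' is precisely the slip: the Gaussian lives at $j\asymp|t|^{-1/2}$.

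With your $c$ you would get $\lambda=\frac{1}{8\sqrt2}$ and $\sigma_e\mathrm{Mex}(n)\sim\frac{1}{32}n^{-3/4}e^{\pi\sqrt n}$, off by a factor $2\sqrt2$. At $n=9$ this predicts about $75$, against $196$ in Table~\ref{ctab1.3}, while $\frac{1}{8\sqrt2}$ predicts about $211$. With the corrected $c$ you get $\lambda=\frac14$, i.e.\ $E_e(e^{-t})\sim\frac14e^{\pi^2/(4t)}$ as in \eqref{c4.6}, and then exactly $\frac{1}{8\sqrt2}$.

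Your tail argument has the same blind spot. The inequality $4^j/(-q;q)_{2j}\ge1$ is a lower bound and controls nothing. The useful inequality is $4^j/(-q;q)_{2j}\le e^{tj(2j+1)/2}$, from convexity of $x\mapsto\log(1+e^{-x})$. It bounds the $j$-th term by $j(1-q^{2j})e^{-tj^2+3tj/2}$ and again exhibits the Gaussian $e^{-j^2t}$.

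For the transfer step, take the Ingham route rather than Wright's method. Your minor-arc claim does not follow from the bounds you cite. Already $\max_j4^j|q|^{2j^2}\approx e^{(\log2)^2/(2x)}$ is exponentially large in $1/x$, and $|(-q;q)_{2j}|^{-1}$ can itself be exponentially large near $q=-1$, so a genuine minor-arc estimate would be needed.

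The monotonicity required by Theorem~\ref{ct4.1} is easy, and the paper uses it without checking. Appending a non-overlined part $1$ is an injection $\bar{P}(n)\to\bar{P}(n+1)$. It leaves $\mathrm{Mex}$ unchanged when $1$ is already a part, and otherwise replaces $\mathrm{Mex}_e=0$ by a nonnegative value, so $\sigma_e\mathrm{Mex}(n)\le\sigma_e\mathrm{Mex}(n+1)$. With Step~2 corrected (on real $t$) and this monotonicity, your argument becomes the paper's proof, made slightly more rigorous.
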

\begin{corollary}\label{cc1.11}
	As $n \to \infty$,
	\begin{equation*}
		{\sigma}{\mathrm{Mex}}(n) \sim 2 {\sigma}_{e}{\mathrm{Mex}}(n). \label{eq:1.7}
	\end{equation*}
\end{corollary}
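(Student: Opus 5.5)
The plan is to read Corollary~\ref{cc1.11} directly off Theorems~\ref{ct1.9a} and \ref{ct1.10}, both of which we take as given. Both asymptotics have the same growth factor $n^{-3/4}e^{\pi\sqrt{n}}$. Hence the quotient satisfies
\begin{equation*}
\lim_{n\to\infty}\frac{\sigma\mathrm{Mex}(n)}{\sigma_e\mathrm{Mex}(n)}
=\lim_{n\to\infty}\frac{\sigma\mathrm{Mex}(n)}{\frac{\sqrt{2}}{8}n^{-3/4}e^{\pi\sqrt{n}}}\cdot\frac{\frac{1}{8\sqrt{2}}n^{-3/4}e^{\pi\sqrt{n}}}{\sigma_e\mathrm{Mex}(n)}\cdot\frac{\sqrt{2}/8}{1/(8\sqrt{2})}.
\end{equation*}
The first two factors tend to $1$ by the two theorems. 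The constant ratio is $\sqrt{2}\cdot\sqrt{2}=2$. This gives $\sigma\mathrm{Mex}(n)\sim 2\,\sigma_e\mathrm{Mex}(n)$.

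The only point requiring care is that the quotient makes sense, i.e.\ that $\sigma_e\mathrm{Mex}(n)\neq 0$ for large $n$. This already follows from Theorem~\ref{ct1.10}, since the right-hand side there is positive and the asymptotic forces $\sigma_e\mathrm{Mex}(n)>0$ for all sufficiently large $n$. We also use that asymptotic equivalence is multiplicative and transitive, which is immediate from the definition of $\sim$.

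The real work therefore lies upstream, in Theorems~\ref{ct1.9a} and \ref{ct1.10}. Those theorems would presumably be proved by a saddle-point or Ingham Tauberian analysis of the generating functions in Theorems~\ref{ct1.3} and \ref{ct1.4} near $q\to1^-$. Having assumed them, there is no obstacle beyond checking that the constants are transcribed correctly. Explicitly,
\[
\frac{\sqrt{2}}{8}\Big/\frac{1}{8\sqrt{2}}=\frac{\sqrt{2}\cdot 8\sqrt{2}}{8}=2 .
\]
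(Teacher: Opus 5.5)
Your proposal is correct and follows the paper's own route: the corollary is read off Theorems~\ref{ct1.9a} and \ref{ct1.10}, since both asymptotics share the factor $n^{-3/4}e^{\pi\sqrt{n}}$ and the constants satisfy $(\sqrt{2}/8)/(1/(8\sqrt{2}))=2$. Your explicit quotient computation, together with the remark that $\sigma_e\mathrm{Mex}(n)>0$ for large $n$, makes the paper's one-line justification precise.
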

\begin{theorem}\label{ct1.12}
	As $n \to \infty$, we have
	\begin{equation*}
		\bar{p}_{\geq 2}(n) \sim \frac{\pi}{32}\, n^{-3/2}\, e^{\pi\sqrt{n}}.
	\end{equation*}
\end{theorem}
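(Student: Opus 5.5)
The plan is to read the asymptotic off the generating function through its behaviour near $q=1$. First I will write down the generating function. An overpartition with all parts at least $2$ is counted by the product over parts $k\ge 2$ of a factor $(1+q^k)/(1-q^k)$, so
\begin{equation*}
	\sum_{n=0}^{\infty}\bar{p}_{\geq 2}(n)q^n=\frac{(-q^2;q)_\infty}{(q^2;q)_\infty}=\frac{1-q}{1+q}\cdot\frac{f_2}{f_1^2}=\frac{1-q}{1+q}\sum_{n=0}^{\infty}\bar{p}(n)q^n .
\end{equation*}
So $\bar{p}_{\geq 2}$ is the overpartition generating function twisted by the factor $(1-q)/(1+q)$, which is analytic near $q=1$.

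Next I will get the behaviour at the dominant singularity. Put $q=e^{-t}$ with $\operatorname{Re}(t)>0$ and $t\to 0$ in a cone $|\operatorname{Im} t|\le \Delta \operatorname{Re} t$. The modular transformation of the Dedekind eta function gives
\begin{equation*}
	\frac{f_2}{f_1^2}=\frac{\eta(2\tau)}{\eta(\tau)^2}\sim \sqrt{\frac{t}{4\pi}}\;e^{\pi^2/(4t)} .
\end{equation*}
Multiplying by $(1-q)/(1+q)=\tfrac{t}{2}+O(t^2)$, I obtain
\begin{equation*}
	\sum_{n\ge0}\bar{p}_{\geq 2}(n)e^{-nt}\sim \frac{t^{3/2}}{4\sqrt{\pi}}\,e^{\pi^2/(4t)} .
\end{equation*}

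Then I will apply a standard Tauberian or saddle-point theorem. For $F(e^{-t})\sim \lambda\, t^{\beta}e^{\gamma/t}$, Ingham's theorem (in the corrected form of Bringmann--Jennings-Shaffer--Mahlburg) or the Wright circle-method lemma gives
\begin{equation*}
	a(n)\sim \frac{\lambda\,\gamma^{\beta/2+1/4}}{2\sqrt{\pi}\,n^{\beta/2+3/4}}\,e^{2\sqrt{\gamma n}} .
\end{equation*}
Here $\lambda=1/(4\sqrt{\pi})$, $\beta=3/2$ and $\gamma=\pi^2/4$, so $2\sqrt{\gamma n}=\pi\sqrt{n}$. The constant is $\frac{1}{4\sqrt{\pi}}\cdot\frac{(\pi/2)^{2}}{2\sqrt{\pi}}=\frac{\pi}{32}$ and the power of $n$ is $n^{-3/2}$, which is the claim.

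As a sanity check, the heuristic agrees: the factor $(1-q)/(1+q)$ multiplies the known asymptotic $\bar{p}(n)\sim e^{\pi\sqrt n}/(8n)$ by roughly $t/2$ at the saddle point $t=\pi/(2\sqrt n)$. This gives $\frac{\pi}{4\sqrt n}\cdot\frac{1}{8n}$, which is the same constant.

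The main obstacle is justifying the hypotheses of the Tauberian step. For Ingham's theorem I need weak monotonicity of $\bar{p}_{\geq 2}(n)$. I will get it from the map that adds $1$ to the largest part, keeping its overline status. Adding $1$ to a part that is at least $2$ preserves the condition that all parts are at least $2$. It is injective, because the original largest part is the unique largest part of the image and can be recovered. It needs a largest part to exist, which holds for $n\ge 2$, and $\bar{p}_{\geq 2}(1)=0$ anyway. If I use the Wright-type lemma instead, I also need a minor-arc bound: off the cone, $|(−q^2;q)_\infty/(q^2;q)_\infty|$ must be smaller than $e^{(\pi^2/4-\varepsilon)/\operatorname{Re} t}$. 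This follows from the same bound for $f_2/f_1^2$, since $(1-q)/(1+q)$ is harmless away from $q=-1$. Near $q=-1$, $f_2/f_1^2$ has growth only $e^{\pi^2/(16\operatorname{Re} t)}$, which is strictly smaller, so the pole of $1/(1+q)$ there does not change the leading term.
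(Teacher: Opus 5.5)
Your proposal is correct and follows the paper's proof. Both factor the generating function as $\frac{1-q}{1+q}\cdot\frac{f_2}{f_1^2}$, use $\frac{1-q}{1+q}\sim t/2$ and the eta-function asymptotics to get $\frac{t^{3/2}}{4\sqrt{\pi}}e^{\pi^2/(4t)}$, and read off $\frac{\pi}{32}n^{-3/2}e^{\pi\sqrt{n}}$ from Ingham's theorem with $\beta=3/2$, $C=\pi^2/4$.

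Two of your steps supply hypotheses the paper never checks: the asymptotic in a cone, which the Bringmann--Jennings-Shaffer--Mahlburg form of Ingham's theorem requires, and the injection giving monotonicity. One fix is needed there: $\bar{p}_{\geq 2}(0)=1>\bar{p}_{\geq 2}(1)=0$, so the sequence is only eventually increasing. Drop the constant term before invoking the theorem; it is negligible against $e^{\pi^2/(4t)}$.
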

In Section \ref{cs2}, we first recall the necessary theta function identities and establish the generating functions of ${\sigma}{\mathrm{Mex}}(n)$ and ${\sigma}_{e}{\mathrm{Mex}}(n)$, which helps us to prove our Cor. \ref{ct1.5}-\ref{ct1.8}. Section \ref{cs5} is devoted to providing relations between the basic hypergeometric series with $\sigma\mathrm{Mex}(n)$ and $\sigma_e\mathrm{Mex}(n)$.  In Section \ref{cs4}, we establish the asymptotic formulas for ${\sigma}{\mathrm{Mex}}(n)$, ${\sigma}_{e}{\mathrm{Mex}}(n)$, and $\bar{p}_{\geq 2}(n)$. All the proof techniques used herein are elementary and relying on classical $q$-series identities and generating function manipulations. 
\section{Generating Functions and Arithmetic Congruences}\label{cs2}
Ramanujan's theta function $f(a,b)$ and two special cases of it, $\varphi(q)$ and $\psi(q)$ along with certain identities satisfied by them, will be beneficial in the proof of Theorems \ref{ct1.5}-\ref{ct1.8}. Ramanujan's general theta function, see \cite{Berndt,Berndt.1}, which is defined for $|ab|<1$, is given by
\begin{equation*}
	f(a,b):=\sum_{n=-\infty}^{\infty}
	a^{\frac{n(n+1)}{2}}b^{\frac{n(n-1)}{2}}.
\end{equation*}
And its special cases are 
\begin{align}
	\varphi(q)	&:=\sum_{n=-\infty}^{\infty}q^{n^2}
	=\frac{f_2^5}{f_1^2f_4^2}, \label{c3.1}\\
	\psi(q)&:=\sum_{n=0}^{\infty}q^{n(n+1)/2}
	=\frac{f_2^2}{f_1}.\label{c3.2}
\end{align}
We obtain \eqref{c3.1} and \eqref{c3.2} using well-known Jacobi triple product identity, which is given by \cite[p. 35]{Berndt}
\begin{equation*}
	f(a,b)=(-a,ab)_{\infty}(-b,ab)_{\infty}(ab,ab)_{\infty}.
\end{equation*}
Lastly, we will utilize the following result,
which, at its core, is an easy consequence of binomial theorem and the divisibility properties of various binomial coefficients.
\begin{lemma}\label{cl3.1}
	For all positive integers $k$ and $m$, we have
	\begin{align*}
		f_m^{2^{k}}\equiv f_{2m}^{2^{k-1}} \pmod{2^k}.
	\end{align*}
\end{lemma}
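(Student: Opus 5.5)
We need to prove $f_m^{2^k}\equiv f_{2m}^{2^{k-1}} \pmod{2^k}$, where $f_m=(q^m;q^m)_\infty$ and the congruence is coefficientwise between power series with integer coefficients. The plan is induction on $k$, with the base case coming from the Frobenius-type congruence $(1-x)^2\equiv 1-x^2 \pmod 2$ and the inductive step from lifting: if $A\equiv B \pmod{2^j}$ then $A^2\equiv B^2 \pmod{2^{j+1}}$.

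\emph{Base case $k=1$.} Since $(1-q^{mn})^2 = 1-2q^{mn}+q^{2mn}\equiv 1+q^{2mn}\equiv 1-q^{2mn}\pmod 2$, taking the product over $n\ge1$ gives $f_m^2\equiv f_{2m}\pmod 2$. Passing to the infinite product is harmless: each coefficient of $q^N$ depends only on the finitely many factors with $mn\le N$, so the congruence reduces to one between polynomials.

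\emph{Inductive step.} Suppose $f_m^{2^k}=f_{2m}^{2^{k-1}}+2^kG(q)$ with $G\in\mathbb{Z}[[q]]$. Squaring gives
\begin{equation*}
f_m^{2^{k+1}}=f_{2m}^{2^{k}}+2^{k+1}f_{2m}^{2^{k-1}}G(q)+2^{2k}G(q)^2 .
\end{equation*}
Since $2k\ge k+1$ for $k\ge1$, both extra terms vanish modulo $2^{k+1}$. This yields $f_m^{2^{k+1}}\equiv f_{2m}^{2^k}\pmod{2^{k+1}}$.

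This is essentially the binomial-theorem argument the paper alludes to. The only point needing care is that all series involved have integer coefficients, so the congruences make sense. That holds because $f_m$ is an infinite product of integer polynomials with constant term $1$. There is no genuine obstacle here; the step requiring the most care is the squaring estimate $2k\ge k+1$.
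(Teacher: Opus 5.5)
Your proof is correct and is essentially the binomial-theorem argument the paper alludes to (the paper gives no written proof, only that hint). Your base case $f_m^2\equiv f_{2m}\pmod 2$ followed by repeated squaring is equivalent to writing $f_m^2=f_{2m}+2H$ and expanding $(f_{2m}+2H)^{2^{k-1}}$; your induction simply avoids having to check that $2^k\mid\binom{2^{k-1}}{j}2^j$ for $j\ge1$.
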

In very next lemma, we obtain congruences modulo $2^{2}$ for  the overpartition $\overline{p}_{\ge2}(n)$. 
\begin{lemma}\label{cl3.2}
	For all $n\ge 1$, we have 
	\begin{align*}
		\sum_{n=0}^{\infty}\bar{p}_{\ge2}(n)q^n\equiv
		\begin{cases}
			0 \pmod{2^{2}}, & \text{if } n \text{ is a perfect square} \\[4pt]
			2 \pmod{2^{2}}, & \text{otherwise}.
		\end{cases}
	\end{align*}
	\begin{proof} We have
		\begin{align}
			\sum_{n=0}^{\infty}\bar{p}_{\ge2}(n)q^n =\prod_{n=2}^{\infty}\dfrac{1+q^n}{1-q^n}
			&=\dfrac{f_2}{f_1^2}\cdot \dfrac{(1-q)}{(1+q)}\label{c3.3a}\\
			&=\dfrac{f_2}{f_1^2} \left(1+2\sum_{n=1}^{\infty}(-1)^nq^n\right)\nonumber\\
			&= \dfrac{f_2}{f_1^2} \left(1+2\sum_{n=1}^{\infty}q^{n^2}+2\sum_{\substack{n=1\\ n\not=k^2}}^{\infty}q^n\right)\nonumber\\
			&=\dfrac{f_2}{f_1^2}\left(\varphi(q)+2\sum_{\substack{n=1\\ n\not=k^2}}^{\infty}q^n\right) \nonumber\\
			&\equiv\left(\dfrac{f_2}{f_1^2}\varphi(q)+2\dfrac{f_2}{f_1^2}\sum_{\substack{n=1\\ n\not=k^2}}^{\infty}q^{n}\right)\pmod{2^2}.\label{c3.3}
		\end{align}
		Employing \eqref{c3.1} and Lemma \ref{cl3.1} in \eqref{c3.3}, we obtain
		\begin{align*}
			\sum_{n=0}^{\infty}\bar{p}_{\ge2}(n)q^n
			&\equiv \left(\dfrac{f_{2}^{6}}{{f_{1}^{4}}{f_{4}^{2}}} + 2\dfrac{f_2}{f_1^2}\sum_{\substack{n=1\\ n\not=k^2}}^{\infty}q^{n}\right)\pmod{2^2}\nonumber\\
			&\equiv 1+2\sum_{\substack{n=1\\ n\not=k^2}}^{\infty}q^n\pmod{2^2}.
		\end{align*}
		This completes the proof of Lemma \ref{cl4.2}. 
	\end{proof}
\end{lemma}
\noindent Next, with all of these tools, we are now in a position to first establish the generating function for ${\sigma}{\mathrm{Mex}}(n)$ and  ${\sigma}_{e}{\mathrm{Mex}}(n)$ employing Def. \ref{cd1.1}-\ref{cd1.2}, respectively.
\begin{proof}[Proof of Theorem \ref{ct1.3}]
	Define	
	\begin{align}
		F_{k}(q)& := \underbrace{\prod_{m=1}^{(k-1)} \frac{2q^m}{1-q^m}}_{\text{values } 1,\ldots,(k-1) \text{ forced}} \cdot \underbrace{1}_{\text{value } k \text{ absent}} \cdot \underbrace{\prod_{m=k+1}^{\infty} \frac{1+q^m}{1-q^m}}_{\text{values} > k\text{ free}} \nonumber\\
		&=2^{(k-1)}\prod_{m=1}^{(k-1)}q^{m}\prod_{m=1}^{(k-1)}\frac{1}{1-q^{m}}\cdot\prod_{m=k+1}^{\infty}(1+q^{m})\prod_{m=k+1}^{\infty}\frac{1}{1-q^{m}} \nonumber\\
		&=\frac{2^{(k-1)} \, q^{\binom{k}{2}}}{(q;q)_{(k-1)}} \cdot  (-q^{k+1};q)_\infty\cdot\frac{1}{(q^{k+1};q)_\infty} \nonumber \\
		&=\frac{2^{(k-1)} \, q^{\binom{k}{2}}}{(q;q)_{(k-1)}} \cdot \frac{(-q^{k+1};q)_\infty \, (q;q)_{(k-1)}{(1-q^{k})}}{(q;q)_\infty}\nonumber\\
		&=\frac{2^{(k-1)} \, q^{\binom{k}{2}}(-q^{k+1};q)_\infty \,{(1-q^{k})}}{(q;q)_\infty}.\label{c2.1}
	\end{align}
	Therefore, on using Def. \ref{cd1.1} in \eqref{c2.1}, we obtain
	\begin{align*}
		\sum_{n=0}^{\infty} {{\sigma}{\mathrm{Mex}}}(n)q^{n}=&\sum_{k=0}^{\infty}{k}\cdot{F_{k}(q)}\\
		=&\frac{1}{(q;q)_\infty}\sum_{k=0}^{\infty}{k \,2^{k-1} \, q^{\binom{k}{2}}(-q^{k+1};q)_\infty \,{(1-q^{k})}}.
	\end{align*}
\end{proof}
\noindent And similarly, we now move to obtain the generating function for $\sigma_{e}{\mathrm{Mex}}(n)$.
\begin{proof}[Proof of Theorem \ref{ct1.4}]
	In Theorem \ref{ct1.3}, substituting $k=2j$ in the identity \eqref{c2.1}, we obtain	
	\begin{align}
		F_{2j}(q):=\frac{2^{2j} \, q^{\binom{2j}{2}}(-q^{2j+1};q)_\infty \,{(1-q^{2j})}}{2(q;q)_\infty}.\label{c2.3}	
	\end{align}
	Therefore, on using Def. \ref{cd1.2} in \eqref{c2.3}, we obtain
	\begin{align*}
		\sum_{n=0}^{\infty} {{\sigma}_{e}{\mathrm{Mex}}}(n)q^{n}=&\sum_{j=0}^{\infty}{2j}\cdot{F_{2j}(q)}\\
		=&\frac{1}{(q;q)_\infty}\sum_{j=0}^{\infty}{j \,2^{2j} \, q^{\binom{2j}{2}}(-q^{2j+1};q)_\infty \,{(1-q^{2j})}}.
	\end{align*}
\end{proof}
\begin{remark}\label{cr2.2}
	For $n=3$, the eight overpartitions of $3$ with their ${\mathrm{Mex}}$ values are shown in the table below. 
	\begin{center}
		\begin{tabular}{|c|c|c|c|c|}
			\hline
			Overpartition & Parts present & ${\mathrm{Mex}}(\lambda)$ & ${\mathrm{Mex}}_e(\lambda)$ & ${\mathrm{Mex}}_{o}(\lambda)$ \\
			\hline
			$3,\ \bar{3}$ & $\{3\}$ & $1$ & $0$ & $1$\\
			$2+1,\ \bar{2}+1,\ 2+\bar{1},\ \bar{2}+\bar{1}$ & $\{1,2\}$ & $3$ & $0$ & $3$\\
			$1+1+1,\ \bar{1}+1+1$ & $\{1\}$ & $2$ & $2$ & $0$ \\
			\hline
		\end{tabular}
	\end{center}
	It is easy to verify that, $\sigma{\mathrm{Mex}}(3) = 2(1) + 4(3) + 2(2) =18 $, $\sigma_{e}{\mathrm{Mex}}(3) = 2(0) + 4(0) + 2(2) = 4$, and $\sigma_{o}{\mathrm{Mex}}(3) = 2(1) + 4(3) + 2(0) = 14$.
\end{remark}
Finally, with the above prerequisites, we are now equipped to prove our Cor. \ref{ct1.5}-\ref{ct1.8}.
\begin{proof}[Proof of Corollary \ref{ct1.5}]
	From Theorem \ref{ct1.3}, we have
	\begin{equation}
		k \cdot 2^{k-1} \equiv 0 \pmod{2} \qquad \forall\ \ k \geq 2.
		\label{c3.4}
	\end{equation}
	Since, for modulo $2$, only $k=1$ term of the sum in Theorem \ref{ct1.3} survives. Hence, we obtain
	\begin{equation}
		\sum_{n=0}^{\infty} {\sigma}{\mathrm{Mex}}(n) q^n\equiv \frac{(1-q)(-q^2;q)_{\infty}}{(q;q)_{\infty}}
		= \frac{(-q^2;q)_{\infty}}{(q^2;q)_{\infty}}= \sum_{n=0}^{\infty} \bar{p}_{\geq 2}(n) q^n \pmod{2}.
		\label{c3.5}
	\end{equation}
	On the other hand, since $(-q^2;q)_{\infty} \equiv (q^2;q)_{\infty} \pmod 2$, we have
	\begin{equation}
		\sum_{n=0}^{\infty} \bar{p}_{\geq 2}(n) q^n \equiv 1 \pmod{2}. \label{c3.6}
	\end{equation}
	Therefore, $\bar{p}_{\geq 2}(n) \equiv 0 \pmod 2$ for all $n \geq 1$. Combining this with \eqref{c3.4} completes the proof of \eqref{c1.1}.
	Again, by Lemma \ref{cl3.2}, comparing the coefficients of $q^n$ on both sides of the congruence in \eqref{c3.5}, we complete the proof of \eqref{c1.2}. Similarly, thanks to Theorem \ref{ct1.4}, since $2^{2j} \equiv 0 \pmod{2^2}$, this immediately implies the proof of \eqref{c1.5}. This completes the proof of Cor. \ref{ct1.5}.
\end{proof}
\begin{proof}[Proof of Corollary \ref{ct1.7}]
	Thanks to \eqref{c1.5} and \eqref{c3.6}, we have
	\begin{equation}
		\sum_{n=0}^{\infty} {\sigma}_{e}{\mathrm{Mex}}(n) q^n \equiv \prod_{n=2}^{\infty} \frac{1+q^n}{1-q^n} \pmod{2} = \sum_{n=1}^{\infty} \bar{p}_{\geq 2}(n) q^n.
		\label{c3.7}
	\end{equation}
	Comparing the coefficients of $q^n$ on both sides of congruence \eqref{c3.7}, completes the proof of \eqref{c1.3}. Similarly, again using identity \ref{c1.5} and Lemma \ref{cl3.2}, we complete the proof of Cor. \ref{ct1.7}.
\end{proof}
\begin{proof}[Proof of Corollary \ref{ct1.8}]
	Using Cor. \ref{ct1.5} in Lemma \ref{cl3.2}, we have
	\begin{equation*}
		{\sigma}_{e}{\mathrm{Mex}}(n) \equiv {\sigma}{\mathrm{Mex}}(n) - \bar{p}_{\geq 2}(n) \equiv
		\begin{cases}
			0 \pmod{2^2}, & \text{if } n \text{ is a perfect square} \\	2 \pmod{2^2}, & \text{otherwise}.
		\end{cases}
	\end{equation*}
	This completes the proof of Cor. \ref{ct1.8}.
\end{proof}
\section{Relation with Basic Hypergeometric Series}\label{cs5}
In this section, we strengthen the connection between   $\sigma\mathrm{Mex}(n)$, $\sigma_e\mathrm{Mex}(n)$ with $	F_1(z)$ and $F_2(z)$, respectively.
\begin{proof}[Proof of Theorem \ref{ct1.9c} ]
	Thanks to the Theorem \ref{ct1.3} and  identity $(-q^{k+1};q)_\infty= \frac{(-q;q)_\infty}{(-q;q)_k}$, we have
	\begin{equation}
		\sum_{n=0}^{\infty}\sigma{\mathrm{Mex}}(n)q^n
		=
		\frac{(-q;q)_\infty}{2(q;q)_\infty}
		\sum_{k=1}^{\infty}
		\frac{
			k2^k(1-q^{k})q^{\binom{k}{2}}
		}{(-q;q)_k}.
		\label{c2.1f}
	\end{equation}
	We next take a significant step forward using \eqref{c1.12a} and note the following
	\begin{equation}
		F_1(z)-F_1(qz)=\sum_{k=0}^{\infty}\frac{(-1)^k(1-q^{k})q^{\binom{k}{2}}z^k}{(-q;q)_k}.\label{c2.1c}
	\end{equation}
	Apply the operator $z\frac{d}{dz}$ term wise and let $z\to -2$ in  \eqref{c2.1c}, we obtain
	\begin{equation}
		\lim_{z\to -2} z\left(F_1^{'}(z)-qF_1^{'}(qz)\right)	=\sum_{k=0}^{\infty}\frac{k2^k(1-q^{k})q^{\binom{k}{2}}}{(-q;q)_k}. \label{c4.14}
	\end{equation}
	Employing the identity \eqref{c4.14} in \eqref{c2.1f}, completes the proof of Theorem \ref{ct1.9c}.
\end{proof}
\begin{proof}[Proof of Theorem \ref{ct1.9b}]
	Thanks to the Theorem \ref{ct1.4} and identities $	(-q^{2j+1};q)_\infty= \frac{(-q;q)_\infty}{(-q;q^2)_j(-q^2;q^2)_j}$ and  $\binom{2j}{2}=2j(j-1)+j$, we obtain
	\begin{equation}
		\sum_{n=0}^{\infty}\sigma_{e}\mathrm{Mex}(n)q^n
		=
		\frac{(-q;q)_\infty}{(q;q)_\infty}
		\sum_{j=1}^{\infty}
		\frac{
			j4^{j}(1-q^{2j})q^{2j(j-1)}q^j
		}{
			(-q;q^2)_j(-q^2;q^2)_j
		}.
		\label{c2.1g}
	\end{equation}
	Similarly, from \eqref{c1.12b}, we have
	\begin{equation}
		F_2(z)-F_2(q^2z)=\sum_{j=0}^{\infty}\frac{(1-q^{2j})q^{2j(j-1)}z^j}{(-q;q^2)_j(-q^2;q^2)_j}.\label{c2.1e}
	\end{equation}
	Apply the operator $z\frac{d}{dz}$ term wise and let $z\to 4q$ in  \eqref{c2.1g}, we obtain
	\begin{align}
		\lim_{z\to {4q}}z\left\{F_2^{'}(z)-q^2F_2^{'}(q^2z)\right\}
		=&\sum_{j=1}^{\infty}\frac{j(1-q^{2j})q^{2j(j-1)}(4q)^j}{(-q;q^2)_j(-q^2;q^2)_j}.\label{c4.15}
	\end{align}
	Employing the identity \eqref{c4.15} in \eqref{c2.1g}, completes the proof of Theorem \ref{ct1.9b}.
\end{proof}
\section{Asymptotic Formula}\label{cs4}
In this section, we establish the asymptotic formulas for $\sigma{\mathrm{Mex}}(n)$, $\sigma_{e}{\mathrm{Mex}}(n)$, and $\overline{p}_{\geq 2}(n)$, with the help of following asymptotic results by Ingham \cite{ingham} for the coefficients of power series.
\begin{theorem}\label{ct4.1}
	Let $A(q)=\sum_{n=0}^{\infty}a(n)q^n$ be a power series with radius of convergence equal to $1$. Suppose that $\{a(n)\}_{n\geq0}$ is a weakly increasing sequence of non-negative real numbers. If there exist constants $\alpha,\beta\in\mathbb{R}$ and $C>0$ such that
	\begin{equation*}
		A(e^{-t})\sim\alpha t^\beta e^{C/t},\qquad t\to0^+
	\end{equation*}
	then
	\begin{equation*}
		a(n)\sim\frac{\alpha}{2\sqrt{\pi}}\,C^{\frac{2\beta+1}{4}}\,n^{-\frac{2\beta+3}{4}}\,e^{2\sqrt{Cn}},\qquad n\to\infty.	
	\end{equation*}
\end{theorem}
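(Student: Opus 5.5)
Since this result is quoted from Ingham \cite{ingham}, the paper will only invoke it. If I were to prove it, I would follow the classical Tauberian route: first derive the asymptotics of the partial sums $S(x):=\sum_{n\le x}a(n)$ from the behaviour of $A(e^{-t})$, and then pass from $S$ to $a(n)$ using monotonicity. Writing $A(e^{-t})=\int_0^\infty e^{-tx}\,dS(x)=t\int_0^\infty S(x)e^{-tx}\,dx$, the hypothesis becomes
\begin{equation*}
\int_0^\infty S(x)e^{-tx}\,dx\sim \alpha t^{\beta-1}e^{C/t},\qquad t\to0^+ .
\end{equation*}
A formal saddle-point inversion then predicts $S(x)\sim K x^{\gamma}e^{2\sqrt{Cx}}$, with the saddle at $x_t=C/t^2$, $\gamma=-(2\beta+1)/4$ and $K=\frac{\alpha}{2\sqrt\pi}C^{(2\beta-1)/4}$.

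\textbf{Step 1 (partial sums).} The upper bound is immediate from positivity: $S(x)\le e^{xt}A(e^{-t})$ for every $t$. Choosing $t=\sqrt{C/x}$ gives the correct exponential order $e^{2\sqrt{Cx}}$, with a polynomial loss. For the sharp asymptotic I would use the positivity of $a(n)$ together with the hypothesis at the nearby points $t(1\pm\delta)$ to show two things:
\begin{itemize}
\item The Laplace integrand $S(x)e^{-tx}$ has all but a negligible proportion of its mass in a window $|x-x_t|\le \eta\, x_t^{3/4}$. This is a Chernoff-type argument: the function $C/t$ is strictly convex, so $A(e^{-t(1\pm\delta)})$ controls the tails.
\item Inside that window, monotonicity of $S$ lets me sandwich $S(x_t)$ between averages of the integral.
\end{itemize}
This localization is the step I expect to be the main obstacle. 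The exponential factor varies by $e^{O(1)}$ across the natural window of width $\asymp x_t^{3/4}$, so crude monotone bounds lose a constant factor. To recover the exact constant, I would first try comparing with the explicit kernel $x^{\gamma}e^{2\sqrt{Cx}}$, in the spirit of Karamata's proof: show that the normalized measure $dS(x)/(x^{\gamma}e^{2\sqrt{Cx}})$ satisfies a Laplace-transform asymptotic uniformly on shifted scales, then apply a Wiener/Karamata-type Tauberian step in the rescaled variable $u=(x-x_t)/x_t^{3/4}$. After the rescaling, the limit transform is Gaussian.

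\textbf{Step 2 (from $S$ to $a$).} Take $h=\delta\sqrt n$. Because $a$ is weakly increasing,
\begin{equation*}
\frac{S(n)-S(n-h)}{h}\le a(n)\le \frac{S(n+h)-S(n)}{h}.
\end{equation*}
By Step 1, $S(n\pm h)/S(n)\to e^{\pm\delta\sqrt C}$. The relative errors $o(1)$ are harmless here, since $e^{\delta\sqrt C}-1$ is a fixed positive number. Hence
\begin{equation*}
a(n)\le S(n)\frac{e^{\delta\sqrt C}-1+o(1)}{\delta\sqrt n},
\end{equation*}
with the analogous lower bound. Letting $\delta\to0$ gives $a(n)\sim S(n)\sqrt{C/n}$.

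Substituting $S(n)\sim K n^{\gamma}e^{2\sqrt{Cn}}$ yields
\begin{equation*}
a(n)\sim \frac{\alpha}{2\sqrt\pi}\,C^{\frac{2\beta+1}{4}}\,n^{-\frac{2\beta+3}{4}}\,e^{2\sqrt{Cn}},
\end{equation*}
which is the stated formula. The remaining check is routine: evaluate the Gaussian integral $\int S(x)e^{-tx}\,dx\approx K x_t^{\gamma}e^{C/t}\sqrt{2\pi}\,\sigma_t$ with $\sigma_t^2=2x_t^{3/2}/\sqrt C$, and confirm that it matches $\alpha t^{\beta-1}e^{C/t}$ for this $K$ and $\gamma$.
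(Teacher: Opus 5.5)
The paper gives no proof of this result; it only quotes it from Ingham. So your argument has to stand on its own, and it breaks down in Step 1 in a way that cannot be repaired.

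\textbf{Why Step 1 fails.} Put $R(x):=S(x)/(Kx^{\gamma}e^{2\sqrt{Cx}})$. The real-axis hypothesis only says that averages of $R$ against the weights $Kx^{\gamma}e^{2\sqrt{Cx}-tx}$ tend to $1$. These weights are essentially Gaussians of width $\sigma_t\asymp x_t^{3/4}$ centred at $x_t=C/t^2$. Positivity of $a$ (monotonicity of $S$), and even monotonicity of $a$, only control relative oscillations of $S$ or $a$ on scales up to about $\sqrt{x}$, the scale on which $e^{2\sqrt{Cx}}$ changes by a constant factor. In your rescaled variable $u=(x-x_t)/x_t^{3/4}$, the one-sided condition you actually get is only $\log R(u')-\log R(u)\ge-\sqrt{C}\,x_t^{1/4}(u'-u)(1+o(1))$ for $u'>u$. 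This degenerates as $t\to0^+$, so there is no uniform slow-decrease condition for a Karamata/Wiener step to use. Your Chernoff localization is fine. What fails is the passage from window averages at scale $x^{3/4}$ to pointwise values.

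\textbf{The statement as quoted is false.} Take $\alpha>0$ and
$a_0(n)=\frac{\alpha}{2\sqrt\pi}C^{\frac{2\beta+1}{4}}n^{-\frac{2\beta+3}{4}}e^{2\sqrt{Cn}}$, so that $\sum a_0(n)e^{-nt}\sim\alpha t^{\beta}e^{C/t}$ by Laplace's method. Put $a(n)=a_0(n)\bigl(1+\tfrac12\cos(n^{2/5})\bigr)$ for $n\ge n_0$ and $a(n)=0$ for $n<n_0$.
\begin{itemize}
\item Monotonicity: $a_0(n+1)/a_0(n)=1+\sqrt{C/n}+O(1/n)$, while the perturbing factor changes by a factor $1+O(n^{-3/5})$. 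Hence $a$ is non-negative and weakly increasing for large $n_0$.
\item Real-axis behaviour: across the window $|n-x_t|\lesssim t^{-3/2}$ the phase $n^{2/5}$ makes $\asymp t^{-3/10}\to\infty$ oscillations. A saddle-point computation shows $\sum a_0(n)\cos(n^{2/5})e^{-nt}$ is smaller than the main term by a factor $\exp(-c\,t^{-3/5})$. So $A(e^{-t})\sim\alpha t^{\beta}e^{C/t}$.
\item Conclusion fails: $a(n)/a_0(n)$ does not converge.
\end{itemize}

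\textbf{What is missing.} It is a hypothesis, not a technique. Bringmann, Jennings-Shaffer and Mahlburg pointed out this defect in the real-variable form of Ingham's theorem. Their corrected version adds a bound of the form $A(e^{-z})\ll|z|^{\beta}e^{C/|z|}$ as $z\to0$ in every cone $|\mathrm{Im}\,z|\le\Delta\,\mathrm{Re}\,z$. Such a bound controls frequencies up to $\asymp\Delta\,x_t^{-1/2}$, which bridges exactly the scales $x^{1/2}$ and $x^{3/4}$. The example above violates it near $z=t+\tfrac{2i}{5}x_t^{-3/5}$.

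Given a correct asymptotic for $S$, your Step 2 (differencing over $h=\delta\sqrt n$ using monotonicity of $a$) is correct. Your constants $K$ and $\gamma$ are also right.
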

\begin{lemma}\label{cl4.2}
	Let $N=N(t)\to\infty$ as $t\to0^+$ with $Nt\to0$ and $N^2t=O(1)$. Then 
	\begin{equation*}
		\prod_{k=1}^{N}(1+e^{-kt})\sim 2^{N} e^{-N^2t/4}.
	\end{equation*}
\end{lemma}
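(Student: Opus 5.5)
Taking logarithms reduces the product to a sum that can be expanded term by term:
\begin{equation*}
\log\prod_{k=1}^{N}(1+e^{-kt})=\sum_{k=1}^{N}\log\bigl(1+e^{-kt}\bigr)=N\log 2+\sum_{k=1}^{N}\log\Bigl(\frac{1+e^{-kt}}{2}\Bigr).
\end{equation*}
So the claim is equivalent to showing that $\sum_{k=1}^{N}\log\bigl(\tfrac{1+e^{-kt}}{2}\bigr)=-\tfrac{N^2t}{4}+o(1)$ as $t\to0^+$. Exponentiating then gives the asymptotic equivalence, since a multiplicative factor $e^{o(1)}$ tends to $1$.

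To evaluate the sum, set $x=kt$. The hypothesis $Nt\to0$ gives $0<x\le Nt\to0$ uniformly for $1\le k\le N$, so Taylor expansion applies uniformly in $k$. We have $\tfrac{1+e^{-x}}{2}=1-\tfrac{x}{2}+\tfrac{x^2}{4}+O(x^3)$, and hence $\log\tfrac{1+e^{-x}}{2}=-\tfrac{x}{2}+\tfrac{x^2}{8}+O(x^3)$. (Equivalently, $\log\cosh(x/2)-x/2=-x/2+x^2/8+O(x^4)$.) Summing over $k$:
\begin{equation*}
\sum_{k=1}^{N}\Bigl(-\frac{kt}{2}+\frac{k^2t^2}{8}+O(k^3t^3)\Bigr)=-\frac{N(N+1)t}{4}+O(N^3t^2)+O(N^4t^3).
\end{equation*}

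The remaining task is to check that all leftover terms vanish, using only the two hypotheses $Nt\to0$ and $N^2t=O(1)$:
\begin{itemize}
\item The correction $-Nt/4$ tends to $0$ because $Nt\to0$.
\item $N^3t^2=(N^2t)(Nt)=O(1)\cdot o(1)\to0$.
\item $N^4t^3=(N^2t)(Nt)^2\to0$.
\end{itemize}
Hence the total is $-N^2t/4+o(1)$, which proves the lemma.

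The only point that needs care is making the $O(x^3)$ Taylor remainder uniform in $k$. This is what the condition $Nt\to0$ provides, since it keeps every $x=kt$ in a shrinking neighbourhood of $0$. The condition $N^2t=O(1)$ is exactly what makes the error terms negligible; without it, the quadratic term $N^3t^2$ would not vanish.
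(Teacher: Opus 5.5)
Your proof is correct and follows essentially the paper's argument. You take logarithms, Taylor-expand $\log(1+e^{-x})$ at $0$ uniformly in $x=kt\le Nt\to0$, sum over $k$, and discard $-Nt/4$, the quadratic contribution $O(N^3t^2)=O\bigl((N^2t)(Nt)\bigr)$, and the higher-order remainder using the two hypotheses.

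The differences are cosmetic: you carry an $O(x^3)$ remainder giving $O(N^4t^3)$, whereas the paper uses $O(x^4)$ because the cubic coefficient vanishes. Your closing remark slightly overstates the role of $N^2t=O(1)$: the argument only needs $N^3t^2\to0$, which can hold even when $N^2t\to\infty$ (e.g.\ $N=\lfloor t^{-3/5}\rfloor$).
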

\begin{proof}
	Define
	\begin{equation*}
		\beta(x):=\log\left(1+e^{-x}\right),\qquad x\geq0.
	\end{equation*}
	Then
	\begin{equation}
		\sum_{k=1}^{N}\beta(x)=\log\prod_{k=1}^{N}\left(1+e^{-x}\right).\label{c4.1b}
	\end{equation}
	Since $\beta(0)=\log 2$,\ $\beta'(0)=-\frac{1}{2}$, and  $\beta''(0)=\frac{1}{4}.$ Therefore, by Taylor's series expansion,
	\begin{equation*}
		\beta(x)=\log 2-\frac{x}{2}+\frac{x^2}{8}+O(x^4), \ \textit{as} \ x\to0.
	\end{equation*}
	Since $Nt\to0$, we have $kt\to0$ uniformly for $1\leq k\leq N$. Hence,
	\begin{equation*}
		\beta(kt)=\log 2-\frac{kt}{2}+\frac{k^2t^2}{8}+O(k^4t^4).
	\end{equation*}
	Therefore, we have
	\begin{align*}
		\sum_{k=1}^{N}\beta(kt)=N\log 2-\frac{t}{2}\sum_{k=1}^{N}k+\frac{t^2}{8}\sum_{k=1}^{N}k^2+O\left(t^4\sum_{k=1}^{N}k^4\right).
	\end{align*}
	It is easy to note, $-\frac{t}{2}\sum_{k=1}^{N}k
	=-\frac{N^2t}{4}-\frac{Nt}{4}$, $t^2\sum_{k=1}^{N}k^2=O(N^3t^2)=O\bigl((N^2t)(Nt)\bigr)=o(1)$, and  $t^4\sum_{k=1}^{N}k^4=O(N^5t^4)=O\bigl((N^2t)^2(Nt)\bigr)=o(1).$
	Since $Nt\to0$, it follows that $-\frac{Nt}{4}=o(1)$.
	Therefore on employing \eqref{c4.1b}, we obtain
	\begin{equation*}
		\log\prod_{k=1}^{N}\left(1+e^{-kt}\right)=N\log 2-\frac{N^2t}{4}+o(1).
	\end{equation*}
	Hence,
	\begin{equation*}
		\prod_{k=1}^{N}(1+e^{-kt})= 2^{N} e^{-\frac{N^2t}{4} (1+o(1))}.
	\end{equation*}
	Therefore,
	\begin{equation*}
		\prod_{k=1}^{N}(1+e^{-kt})\sim 2^{N} e^{-\frac{N^2t}{4}},	
	\end{equation*}
	which completes the proof.
\end{proof}
\noindent We now move to the proofs of Theorems \ref{ct1.9a}-\ref{ct1.10}.
\begin{proof}[Proof of Theorem \ref{ct1.9a}]
	Define
	\begin{equation*}
		E(q):=\sum_{n=0}^{\infty}\sigma{{\mathrm{Mex}}}(n)q^n.
	\end{equation*}
	On substituting $q=e^{-t}$ in above identity and then employing Theorem \ref{ct1.3}, we obtain
	\begin{equation}
		E(e^{-t})=\sum_{N}{N}\cdot{F_{N}(t)}.\label{c4.1}
	\end{equation}
	Further, using \eqref{c2.1} with $f_1(t):=(e^{-t};e^{-t})_\infty$ and $f_2(t):=(e^{-2t};e^{-2t})_\infty$, we obtain
	\begin{equation*}
		F_{N}(t)=\frac{1}{f_1(t)}\cdot2^{N-1}e^{-N(N-1)t/2}
		(1-e^{-Nt})\cdot\frac{f_2(t)}{f_1(t)}\cdot\frac{1}{\displaystyle\prod_{k=1}^{N}(1+e^{-kt})}.
	\end{equation*}
	Hence, on employing Lemma \ref{cl4.2}, we obtain
	\begin{equation}
		{N}\cdot{F_{N}(t)} \sim \frac{f_2(t)}{2f_1^{2}(t)}Ne^{-(N^2t/4-Nt/2)}(1-e^{-Nt}).\label{c4.2}
	\end{equation}
	For integers $N$ and $u\geq 0$, set $N=\frac{u}{\sqrt{t}}$, as $Nt\to0$ and $N^{2}t\to u^{2}$. We have $1-e^{-Nt}\sim Nt=u{\sqrt{t}}.$ Therefore,
	\begin{equation*}
		Ne^{-(N^2t/4-Nt/2)}(1-e^{-Nt})\sim u^2e^{-u^2/4}.
	\end{equation*}
	By Riemann sum, we have $\int_0^\infty u^2 e^{-u^2/4}\,du = 2\sqrt{\pi}$. Hence,
	\begin{align*}
		\sum_{N} Ne^{-(N^2t/4-Nt/2)}(1-e^{-Nt}) \sim \frac{1}{\sqrt{t}}
		\int_0^\infty u^2e^{-u^2/4}\,du =\frac{2\sqrt{\pi}}{\sqrt{t}}.
	\end{align*}
	Consequently, using \eqref{c4.1} and \eqref{c4.2}, we obtain
	\begin{equation*}
		\sum_{N}{N}\cdot{F_{N}(t)}\sim\frac{\sqrt{\pi}}{\sqrt{t}}\frac{f_2(t)}{f_1^2(t)}.
	\end{equation*}
	Since, $f_1(t)\sim\sqrt{\frac{2\pi}{t}}e^{-\frac{\pi^2}{6t}}$ and $f_2(t)\sim\sqrt{\frac{\pi}{t}}e^{-\frac{\pi^2}{12t}}$, above asymptotic formula simplifies to
	\begin{equation*}
		E(e^{-t})=\sum_{N}{N}\cdot{F_{N}(t)}\sim\frac{1}{2}e^{\frac{\pi^2}{4t}}.
	\end{equation*}
	On setting $	\alpha=\frac12,
	\ \beta=0, \ \text{and} \ C=\frac{\pi^2}{4}$ in Theorem  \ref{ct4.1}, we obtain
	\begin{align*}
		\sigma{\mathrm{Mex}}(n)\sim\frac{1}{4\sqrt{\pi}}\left(\frac{\pi^2}{4}\right)^{1/4}n^{-3/4}e^{\pi\sqrt{n}}.
	\end{align*}
	This completes the proof of Theorem \ref{ct1.9a}.
\end{proof}
\begin{proof}[Proof of Theorem \ref{ct1.10}]
	Define
	\begin{equation*}
		E_e(q):=\sum_{n=0}^{\infty}\sigma_{e}{\mathrm{Mex}}(n)q^n.
	\end{equation*}
	We now first set $q=e^{-t}$ in Theorem \ref{ct1.4} and the replace $N = 2j$ in the resulting identity to obtain
	\begin{equation*}
		E_e(e^{-t})=\frac{1}{f_1(t)}\sum_{N}{N}2^{N-1}e^{-N(N-1)t/2}
		(1-e^{-Nt})(-e^{-(N+1)t};e^{-t})_\infty,
	\end{equation*}
	which is same as identity \eqref{c4.1}. Therefore, for $u\geq 0$, we obtain
	\begin{equation*}
		Ne^{-(N^2t/4-Nt/2)}(1-e^{-Nt})\sim {u^2}e^{-u^2/4}.
	\end{equation*}
	Since $N$ is even,  $\Delta u=2\sqrt{t}$. By Riemann sum, we have $\int_0^\infty u^2 e^{-u^2/4}\,du = 2\sqrt{\pi}$. Hence,
	\begin{align*}
		\sum_{N} Ne^{-(N^2t/4-Nt/2)}(1-e^{-Nt}) \sim \frac{1}{2\sqrt{t}}
		\int_0^\infty {u^2}e^{-u^2/4}\,du =\frac{\sqrt{\pi}}{\sqrt{t}}.
	\end{align*}
	Therefore,
	\begin{equation}
		E_e(e^{-t})\sim\frac14e^{\frac{\pi^2}{4t}}. \label{c4.6}
	\end{equation}
	Now, applying Theorem \ref{ct4.1} in \eqref{c4.6}. We complete the Proof of Theorem \ref{ct1.10}.
\end{proof}
\begin{proof}[Proof of Corollary \ref{cc1.11}]
	From Theorems \ref{ct1.9a} and \ref{ct1.10}, we conclude, $\sigma{\mathrm{Mex}}(n)$ and $ \sigma_{e}{\mathrm{Mex}}(n)$ have same exponential growth rate of $e^{\pi\sqrt{n}}$ with a difference of factor 2.
\end{proof}
\begin{proof}[Proof of Theorem \ref{ct1.12}]
	From \eqref{c3.3a}, we have 
	\begin{equation*}
		\sum_{n=0}^{\infty} \bar{p}_{\geq 2}(n) q^n = \frac{(-q^2;q)_{\infty}}{(q^2;q)_{\infty}}.
		\label{c4.1a}
	\end{equation*}
	Define
	\begin{equation*}
		G(q) := \frac{1-q}{1+q}\cdot \frac{f_2}{f_1^{2}}.
	\end{equation*}
	For $q = e^{-t}$, it is easy to observe, $\frac{1-q}{1+q} \sim \frac{t}{2} \ \text{as} \ t\to0^{+}$.\\
	
	\noindent Finally in $G(e^{-t})$, we employ $f_1(t)\sim\sqrt{\frac{2\pi}{t}}e^{-\frac{\pi^2}{6t}}$, $f_2(t)\sim\sqrt{\frac{\pi}{t}}e^{-\frac{\pi^2}{12t}}$, and Theorem \ref{ct4.1} to complete the proof of Theorem \ref{ct1.12}.	
\end{proof}	

	\bigskip
\bigskip

\noindent
Department of Mathematics\\
Ramanujan School of Mathematical Sciences\\
Pondicherry University\\
Puducherry- 605 014, India.\\

\noindent Email: \texttt{iamdipikasarkar@pondiuni.ac.in}

\noindent Email: \texttt{tthejithamp@pondiuni.ac.in}

\noindent	Email: \texttt{dr.fathima.sn@pondiuni.ac.in} (\Letter)
\end{document}